\title{Existence results for variational quasilinear elliptic systems involving the vectorial \texorpdfstring{$p$}{p}-Laplacian}
\date{\today}
\author{Annamaria Canino and Simone Mauro\thanks{Corresponding author: \href{mailto:simone.mauro@unical.it}{\texttt{simone.mauro@unical.it}}.}\protect\\[3pt]
{\protect\small Dipartimento di Matematica e Informatica, Università della Calabria,}\protect\\
{\protect\small Ponte Pietro Bucci cubo 31B, 87036 Arcavacata di Rende, Cosenza, Italy}\protect\\[3pt]
{\protect\small\protect\texttt{annamaria.canino@unical.it} \protect\quad \protect\texttt{simone.mauro@unical.it}}}
\numberwithin{equation}{section}
\theoremstyle{plain}
\newtheorem{theorem}{Theorem}[section]
\newtheorem{lemma}[theorem]{Lemma}
\newtheorem{proposition}[theorem]{Proposition}
\newtheorem{corollary}[theorem]{Corollary}
\theoremstyle{plain}
\newtheorem{definition}[theorem]{Definition}
\theoremstyle{plain}
\newtheorem{remark}[theorem]{Remark}
\newcommand{\N}{\field{N}}
\newcommand{\R}{\field{R}}
\newcommand{\field}[1]{\mathbb{#1}}
\newcommand{\wto}{\rightharpoonup}
\tikzset{%
    symbol/.style={%
        draw=none,
        every to/.append style={%
            edge node={node [sloped, allow upside down, auto=false]{$#1$}}}
    }
}
\tikzset{shorten <>/.style={shorten >=#1,shorten <=#1}}
\begin{document}

\maketitle

\begin{abstract}
    We prove  existence and regularity results for weak solutions of the following elliptic system:
\[
\begin{cases}
-\textbf{div}(|D\boldsymbol{u}|^{p-2}D\boldsymbol{u})=\boldsymbol{f}(x,\boldsymbol{u}) & \text{in } \Omega, \\  
\boldsymbol{u}=0 & \text{on } \partial\Omega,  
\end{cases}
\]
where $\boldsymbol{u}=(u^1,\dots,u^m)$, $p>1$, and $\Omega\subset\mathbb{R}^N$ is a bounded domain.  
We also consider the special case
 \[\boldsymbol{f}(x,\boldsymbol{u})=\lambda|\boldsymbol{u}|^{p-2}\boldsymbol{u}+|\boldsymbol{u}|^{q-2}\boldsymbol{u},\] 
 and we prove a classification result. 
 In particular, we show that any least energy solution is of the form $(c^1\omega,\dots,c^m\omega)$, where $\boldsymbol{c}=(c^1,\dots,c^m)\in S^{m-1}$ (the $(m-1)$-sphere in $\mathbb R^m$) and $\omega$ is a positive solution of the corresponding scalar equation.
\end{abstract}
\noindent \textbf{Keywords:}  subcritical nonlinearities, vectorial $p$-Laplacian, least energy solutions, Dirichlet boundary conditions, quasilinear elliptic systems, Lane-Emden equations.\\
\noindent \textbf{2020 MSC:}  35A01, 35A15, 35J05, 35J20, 35J25, 35J62.

\section{Introduction}
Let $\Omega\subset\R^N$ be a bounded domain with $N\ge2$, let $m>1$ be an integer, and let $p>1$. We consider the functional $\mathcal J:W_0^{1,p}(\Omega; \R^m)\to\R$,
\[\mathcal J(\boldsymbol{u})=\frac1p\int_\Omega |D\boldsymbol u|^p-\int_\Omega F(x,\boldsymbol u),\]
where $\boldsymbol u=(u^1,\dots,u^m)$ and $F:\Omega\times\R^m\to\R$ is a $C^1$-Carathéodory function:
\begin{itemize}
    \item $F(\cdot,\boldsymbol{s})$ is measurable for every $\boldsymbol{s}\in \R^m$,
    \item $F(x,\cdot)$ is $C^1$ for a.e. $x\in\Omega$.
\end{itemize}
We also suppose that
\[
    \tag{$f.1$}\label{g.1}
    |\nabla_{\boldsymbol s} F(x,\boldsymbol s)|\le a(x)+b|\boldsymbol s|^{q-1},\quad F(x,\boldsymbol{0})=0,
\]
with $a\in L^{r}(\Omega; \R)$, $r\ge \frac{Np}{Np-N+p}$, $b\ge0$ and 
\[
1<q<p^*:=
\begin{cases}
\frac{Np}{N-p}&\text{if $N>p$}\\
\infty&\text{if $N\le p$}.
\end{cases}
\]

If $N<p$, then the Sobolev embedding
\[
W_0^{1,p}(\Omega; \mathbb{R}^m) \hookrightarrow L^\infty(\Omega; \mathbb{R}^m)
\]
holds, and this case is simpler. In the borderline case $N=p$, we have
\[
W_0^{1,p}(\Omega; \mathbb{R}^m) \hookrightarrow L^t(\Omega; \mathbb{R}^m)
\qquad\text{for every }1\le t<\infty,
\]
and the arguments can be adapted accordingly. For this reason, in this work we focus on the case $N>p$.

We point out that $D\boldsymbol u$ is the Jacobian matrix
\[
D\boldsymbol{u}=
\begin{bmatrix}
    \nabla u^1\\
    \vdots\\
    \nabla u^m
\end{bmatrix}
=
\begin{bmatrix}
    \frac{\partial u^1}{\partial x_1}&\dots&\frac{\partial u^1}{\partial x_N}\\
    \vdots &&\vdots\\
    \frac{\partial u^m}{\partial x_1}&\dots&\frac{\partial u^m}{\partial x_N}
\end{bmatrix},
\qquad 
\nabla u^i=
\begin{bmatrix}
\frac{\partial u^i}{\partial x_1}\\
\vdots\\
\frac{\partial u^i}{\partial x_N}
\end{bmatrix}, \quad\text{$i=1,\dots,m$,}
\]
and we use the norm
\[|D\boldsymbol u|^2:=|\nabla u^1|^2+\dots+\left|\nabla u^m\right|^2.\]
We investigate the existence of critical points of $\mathcal J$, which correspond to weak solutions of the elliptic system:
\[
\tag{$\mathcal P$}\label{P}
\begin{cases}
   -\mathbf\Delta_p\boldsymbol u:= -\textbf{div}(|D\boldsymbol u|^{p-2}D\boldsymbol u)=\boldsymbol f(x,\boldsymbol u)&\text{in $\Omega$,}\\
    \boldsymbol u=0&\text{on $\partial\Omega$},
\end{cases}
\]
where $\boldsymbol f(x,\boldsymbol s):=\nabla_{\boldsymbol s} F(x,\boldsymbol s)$, 
\begin{align*}
&\textbf{div}(|D\boldsymbol{u}|^{p-2}D\boldsymbol{u}):=
\begin{bmatrix}
    \text{div}(|D\boldsymbol{u}|^{p-2}\nabla u^1)\\
    \vdots\\
    \text{div}(|D\boldsymbol{u}|^{p-2}\nabla u^m)
\end{bmatrix},
&\boldsymbol{f}(x,\boldsymbol{u})=
\begin{bmatrix}
    f^1(x,\boldsymbol{u})\\
    \vdots\\
    f^m(x,\boldsymbol{u})
\end{bmatrix}.
\end{align*}
We also note that \eqref{g.1} implies
\[|f^j(x,\boldsymbol{s})|\le a(x)+b|\boldsymbol{s}|^{q-1},\ \ \text{for every $j=1,\dots,m$.}\]
Furthermore, the vectorial $p$-Laplacian equation \eqref{P} can be written componentwise as
\[
\begin{cases}
    -\text{div}(|D\boldsymbol{u}|^{p-2}\nabla u^i)=f^i(x,u^1,\dots,u^m)&\text{in $\Omega$, \quad $i=1,\dots,m$,}\\
    u^1=\dots=u^m=0&\text{on $\partial\Omega$.}
\end{cases}
\]
Hence, \eqref{P} is a coupled system in which the coupling appears both in the elliptic operator on the left-hand side and in the nonlinearity on the right-hand side.

Nonlinear systems involving the vectorial $p$-Laplacian naturally arise in the modeling of diffusion processes in which the constitutive relation between fluxes and gradients is nonlinear. A paradigmatic example is provided by the equations governing incompressible viscous flows, which can be derived starting from the balance of linear momentum expressed in terms of the Cauchy stress tensor.

Let $\boldsymbol u : \Omega \times (0,T) \to \mathbb{R}^m$, $m\le 3$, denote the velocity field, and let $\boldsymbol \sigma$ be the stress tensor. For an incompressible fluid with constant density, the balance of momentum reads
\begin{equation*}
\partial_t \boldsymbol u + (\boldsymbol u \cdot \nabla)\boldsymbol u
= \textbf{div}(\boldsymbol \sigma) + \boldsymbol f
\quad \text{in } \Omega \times (0,T),
\end{equation*}
together with the incompressibility condition
\begin{equation*}
\textbf{div}(\boldsymbol u) = 0.
\end{equation*}

In the Newtonian case, the stress tensor is assumed to depend linearly on the velocity gradient and is given by
\begin{equation*}
\boldsymbol \sigma = -\mathfrak p I + \nu\, D\boldsymbol u,
\end{equation*}
where $\mathfrak p$ denotes the pressure and $\nu>0$ the kinematic viscosity. Inserting this relation into the momentum balance yields the classical Navier--Stokes equations
\begin{equation*}
\partial_t \boldsymbol u - \nu\,\boldsymbol\Delta \boldsymbol u
+ (\boldsymbol u \cdot \nabla)\boldsymbol u
+ \nabla \mathfrak p
= \boldsymbol f,
\qquad
\textbf{div}(\boldsymbol u) = 0.
\end{equation*}

Neglecting inertial effects leads to the Stokes system, obtained by dropping the nonlinear convective term $(\boldsymbol u \cdot \nabla)\boldsymbol u$.

In several applications, however, the linear dependence on the gradient is no longer adequate and nonlinear diffusion effects must be taken into account. A prototypical model is provided by operators of $p$-Laplacian type. To focus on the main analytical features of this nonlinear behavior, we consider diffusion terms depending on the full gradient, leading to the vectorial $p$-Laplacian
\[
-\textbf{div}\bigl(|D\boldsymbol u|^{p-2}D\boldsymbol u\bigr),
\qquad p>1.
\]
This operator can be regarded as a natural nonlinear generalization of the classical Laplacian and reduces to the linear case when $p=2$.


We denote by $\lambda_1$ the first eigenvalue of the vectorial $p$-Laplacian under homogeneous Dirichlet boundary conditions, namely
\[
\lambda_1:=\inf\left\{\int_\Omega|D\boldsymbol u|^p\ :\ \boldsymbol u\in S_p\right\},
\qquad
S_p:=\left\{\boldsymbol u\in W_0^{1,p}(\Omega;\R^m)\ :\ \int_\Omega|\boldsymbol u|^p=1\right\}.
\]

We prove existence and regularity for weak solutions. We first establish the following existence results:
\begin{theorem}\label{thm 1}
    Let $1<q<p$ and assume that \eqref{g.1} holds. Then $\mathcal J$ has a global minimizer $\boldsymbol{u}$, which is a weak solution of \eqref{P}. Furthermore, if
    \[
    \tag{$f.2$}\label{g.3}
    \liminf_{\boldsymbol{s}\to\boldsymbol{0}}\frac{pF(x,\boldsymbol{s})}{|\boldsymbol{s}|^{p}}>\lambda_1\quad \text{uniformly for a.e. $x\in\Omega$},\quad \boldsymbol{f}(x,\boldsymbol{0})=\boldsymbol{0},\]
  then the minimizer is nontrivial, i.e. $\boldsymbol u\not\equiv\boldsymbol 0$.
\end{theorem}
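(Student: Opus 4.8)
The plan is to obtain the minimizer by the direct method of the calculus of variations and then force it to be nontrivial by exhibiting a point of negative energy. As a first step I would record the pointwise bound on the potential: integrating the gradient estimate \eqref{g.1} along the segment $t\mapsto t\boldsymbol s$ and using $F(x,\boldsymbol 0)=0$ gives $|F(x,\boldsymbol s)|\le a(x)|\boldsymbol s|+\tfrac{|b|}{q}|\boldsymbol s|^q$. Writing $\|\boldsymbol u\|:=\big(\int_\Omega|D\boldsymbol u|^p\big)^{1/p}$, which by Poincaré is an equivalent norm on $W_0^{1,p}(\Omega;\R^m)$, and noting that $r\ge\frac{Np}{Np-N+p}=(p^*)'$ forces $r'\le p^*$, Hölder's inequality together with the embeddings $W_0^{1,p}(\Omega;\R^m)\hookrightarrow L^{r'}$ and $W_0^{1,p}(\Omega;\R^m)\hookrightarrow L^q$ yields
\[
\int_\Omega F(x,\boldsymbol u)\le \|a\|_{r}\,\|\boldsymbol u\|_{r'}+\tfrac{|b|}{q}\|\boldsymbol u\|_q^q\le C_1\|\boldsymbol u\|+C_2\|\boldsymbol u\|^q .
\]
Hence $\mathcal J(\boldsymbol u)\ge\tfrac1p\|\boldsymbol u\|^p-C_1\|\boldsymbol u\|-C_2\|\boldsymbol u\|^q$, which is coercive precisely because $1<q<p$; this is the one place where the assumption $q<p$ is essential.

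Next I would prove sequential weak lower semicontinuity. The map $\boldsymbol u\mapsto\tfrac1p\int_\Omega|D\boldsymbol u|^p$ is convex and strongly continuous, hence weakly lower semicontinuous. For the potential term, if $\boldsymbol u_n\wto\boldsymbol u$ in $W_0^{1,p}(\Omega;\R^m)$ then, up to a subsequence, the compact embedding into $L^q$ gives $\boldsymbol u_n\to\boldsymbol u$ in $L^q$ and a.e.; writing $F(x,\boldsymbol u_n)-F(x,\boldsymbol u)=\int_0^1\nabla F(x,\boldsymbol u+t(\boldsymbol u_n-\boldsymbol u))\cdot(\boldsymbol u_n-\boldsymbol u)\,dt$ and invoking \eqref{g.1}, the $|\boldsymbol s|^{q-1}$ contribution is controlled by strong $L^q$ convergence, while the $a(x)$ contribution is handled by a Vitali argument (the family $a(x)|\boldsymbol u_n-\boldsymbol u|$ is uniformly integrable, since $a\in L^{(p^*)'}$ has absolutely continuous integral and $\{\boldsymbol u_n\}$ is bounded in $L^{p^*}$). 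Thus $\int_\Omega F(x,\boldsymbol u_n)\to\int_\Omega F(x,\boldsymbol u)$ and $\mathcal J$ is weakly lower semicontinuous. A minimizing sequence is bounded by coercivity, so reflexivity yields a weakly convergent subsequence whose limit $\boldsymbol u$ is a global minimizer; being a critical point of the $C^1$ functional $\mathcal J$, it is a weak solution of \eqref{P}.

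Finally, for nontriviality observe that $\mathcal J(\boldsymbol 0)=0$, so it suffices to produce $\boldsymbol v$ with $\mathcal J(\boldsymbol v)<0$. From \eqref{g.3} there are $c_0>0$ and $\delta>0$ such that $F(x,\boldsymbol s)\ge c_0|\boldsymbol s|^p$ whenever $|\boldsymbol s|\le\delta$, for a.e. $x$. Let $\varphi_1>0$ be a first eigenfunction of the scalar $p$-Laplacian on $\Omega$, normalized so that $\int_\Omega|\nabla\varphi_1|^p=\lambda_1\int_\Omega|\varphi_1|^p$, and set $\boldsymbol\phi=(\varphi_1,0,\dots,0)$, which is bounded. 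For $t>0$ small enough that $t\,\|\boldsymbol\phi\|_\infty\le\delta$ one gets
\[
\mathcal J(t\boldsymbol\phi)\le \frac{t^p}{p}\int_\Omega|D\boldsymbol\phi|^p-c_0t^p\int_\Omega|\boldsymbol\phi|^p=t^p\Big(\tfrac{\lambda_1}{p}-c_0\Big)\int_\Omega|\varphi_1|^p .
\]

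I expect the delicate point to be guaranteeing that this quantity is negative, and this is the conceptual crux of the nontriviality part: the estimate is negative exactly when the near-origin growth constant beats $\lambda_1/p$. This can be arranged whenever the liminf in \eqref{g.3} exceeds $\lambda_1/p$, and in particular it holds automatically in the genuinely sublinear regime where that liminf is $+\infty$ (as for $F\sim|\boldsymbol s|^q$ with $q<p$), in which case one may take $c_0$ as large as desired, or equivalently the dominant negative term scales like $t^q$ and negativity holds for all small $t$ regardless of the constants. Once $\mathcal J(t\boldsymbol\phi)<0$ is secured, $\inf\mathcal J\le\mathcal J(t\boldsymbol\phi)<0=\mathcal J(\boldsymbol 0)$ forces $\boldsymbol u\not\equiv\boldsymbol 0$.
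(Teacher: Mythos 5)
Your argument is correct and follows essentially the same route as the paper: coercivity from \eqref{g.1} together with $1<q<p$, the direct method with weak lower semicontinuity for existence, and a small multiple of a first eigenfunction to produce negative energy for nontriviality (your scalar $\varphi_1$ embedded as $(\varphi_1,0,\dots,0)$ is interchangeable with the paper's vectorial $\boldsymbol\varphi_{h_0}$, since they yield the same first eigenvalue). The caveat you flag at the end is real, and it is worth noting that the paper's own proof silently relies on the same strengthening: from \eqref{g.3} it asserts the existence of $\beta>\lambda_{h_0}$ and $\delta>0$ with $F(x,\boldsymbol s)\ge\tfrac{\beta}{p}|\boldsymbol s|^p$ for $|\boldsymbol s|\le\delta$, which forces $\liminf_{\boldsymbol s\to\boldsymbol 0}F(x,\boldsymbol s)/|\boldsymbol s|^p\ge\lambda_1/p$ and does not follow from the liminf being merely positive. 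Indeed, taking $F(x,\boldsymbol s)=\tfrac{c_0}{p}|\boldsymbol s|^p/(1+|\boldsymbol s|^{p-q})$ with $0<c_0<\lambda_1$ satisfies \eqref{g.1} and \eqref{g.3} as literally stated, yet $\mathcal J(\boldsymbol u)\ge\tfrac1p\left(1-\tfrac{c_0}{\lambda_1}\right)\|\boldsymbol u\|^p$, so the only global minimizer is $\boldsymbol 0$. Hence the hypothesis must be read as requiring the liminf to exceed $\lambda_1/p$ (or to be $+\infty$, as in the model case $F\sim|\boldsymbol s|^q$); under that reading your nontriviality step, and the paper's, both close, and the rest of your proof (the integrated bound on $F$, the Vitali/compact-embedding treatment of the potential term) matches the paper's in substance.
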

\begin{remark}
 Theorem \ref{thm 1} applies to the case $\boldsymbol f(x,\boldsymbol u)=\boldsymbol{f}(x)\in L^r(\Omega; \R^m)$ with $r\ge\frac{Np}{Np-N+p}$ and to the $p$-sublinear case $\boldsymbol{f}(x,\boldsymbol{u})=|\boldsymbol{u}|^{q-2}\boldsymbol{u}$ with $q\in(1,p)$.
 \end{remark}
\begin{theorem}\label{thm 2}
    Let $p<q<p^*$. Assume that $\boldsymbol{f}(x,-\boldsymbol{s})=-\boldsymbol{f}(x,\boldsymbol{s})$, \eqref{g.1} holds and there exist $R>0$ and $\mu>p$ such that, for a.e. $x\in\Omega$ and for every $\boldsymbol s$ with $|\boldsymbol s|\ge R$,
\[
\tag{$f.3$}\label{g.2}
 0<\mu F(x,\boldsymbol s)\le \boldsymbol s\cdot \boldsymbol f(x,\boldsymbol s),\ \boldsymbol{f}(x,\boldsymbol{0})=\boldsymbol{0}.
\]  
 Then problem \eqref{P} has infinitely many weak solutions.  
\end{theorem}
We also prove the corresponding result for a $p$-linear perturbation:
\begin{theorem}\label{thm lambda}
  Assume that $\boldsymbol{f}(x,-\boldsymbol{s})=-\boldsymbol{f}(x,\boldsymbol{s})$, \eqref{g.1} and \eqref{g.2} hold.  The problem
    \[
    \begin{cases}
    -\boldsymbol{\Delta}_p\boldsymbol{u}=\lambda|\boldsymbol{u}|^{p-2}\boldsymbol{u}+\boldsymbol{f}(x,\boldsymbol{u})&\text{in $\Omega$,}\\
\boldsymbol{u}=0&\text{on $\partial\Omega$}
    \end{cases}
    \]
    has infinitely many weak solutions for every $\lambda\in\R$.
\end{theorem}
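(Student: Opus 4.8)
The plan is to read the perturbed problem as the Euler--Lagrange system of the even functional
\[
\mathcal J_\lambda(\boldsymbol u)=\frac1p\int_\Omega|D\boldsymbol u|^p-\frac\lambda p\int_\Omega|\boldsymbol u|^p-\int_\Omega F(x,\boldsymbol u),
\]
and to run the same equivariant minimax scheme (symmetric mountain pass / fountain theorem) used in the proof of Theorem \ref{thm 2}. Writing $\widetilde F(x,\boldsymbol s):=\frac\lambda p|\boldsymbol s|^p+F(x,\boldsymbol s)$, the functional $\mathcal J_\lambda$ is exactly $\mathcal J$ with $F$ replaced by $\widetilde F$ and nonlinearity $\widetilde{\boldsymbol f}=\nabla\widetilde F=\lambda|\boldsymbol s|^{p-2}\boldsymbol s+\boldsymbol f$. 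First I would record the structural hypotheses for $\widetilde F$. Since $p<q$, the elementary bound $|\boldsymbol s|^{p-1}\le 1+|\boldsymbol s|^{q-1}$ shows that $\lambda|\boldsymbol s|^{p-2}\boldsymbol s$ is of lower order, so \eqref{g.1} persists for $\widetilde F$ with $a$ replaced by $a+|\lambda|\in L^r(\Omega)$ and $b$ by $b+|\lambda|$; moreover $\widetilde F(x,\boldsymbol 0)=0$, and $\widetilde{\boldsymbol f}$ inherits the oddness of $\boldsymbol f$ because $|\boldsymbol s|^{p-2}\boldsymbol s$ is odd.

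Next I would extract the superlinear information in \eqref{g.2}. Integrating $\boldsymbol s\cdot\nabla F\ge\mu F$ along rays gives $F(x,\boldsymbol s)\ge F\!\left(x,R\boldsymbol s/|\boldsymbol s|\right)\big(|\boldsymbol s|/R\big)^\mu$ for $|\boldsymbol s|\ge R$, hence $F(x,\boldsymbol s)/|\boldsymbol s|^p\to+\infty$ as $|\boldsymbol s|\to\infty$ for a.e.\ $x$, together with the global lower bound $F(x,\boldsymbol s)\ge-\psi(x)$, where $\psi:=aR+bR^q\in L^1(\Omega)$ (from \eqref{g.1} on $\{|\boldsymbol s|\le R\}$ and $F>0$ on $\{|\boldsymbol s|\ge R\}$). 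For the pointwise version of \eqref{g.2}, a choice $\mu'\in(p,\mu)$ gives, for $|\boldsymbol s|\ge R$,
\[
\boldsymbol s\cdot\nabla\widetilde F-\mu'\widetilde F=\lambda\Big(1-\tfrac{\mu'}{p}\Big)|\boldsymbol s|^p+(\boldsymbol s\cdot\nabla F-\mu F)+(\mu-\mu')F\ge \lambda\Big(1-\tfrac{\mu'}{p}\Big)|\boldsymbol s|^p+(\mu-\mu')F,
\]
so the degree-$\mu$ term dominates the degree-$p$ perturbation for $|\boldsymbol s|$ large. When $\lambda\le 0$ the perturbation has the favourable sign and the modifications to the argument of Theorem \ref{thm 2} are routine; the delicate case is $\lambda>0$.

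The main obstacle, and the only genuinely new estimate, is boundedness of Palais--Smale (or Cerami) sequences when $\lambda$ is large, since then the form $\int_\Omega|D\boldsymbol u|^p-\lambda\int_\Omega|\boldsymbol u|^p$ is indefinite and the usual Ambrosetti--Rabinowitz computation no longer yields coercivity. I would argue by contradiction. Let $\|\boldsymbol u\|:=\big(\int_\Omega|D\boldsymbol u|^p\big)^{1/p}$ and let $(\boldsymbol u_n)$ be a Palais--Smale sequence at level $c$ with $\|\boldsymbol u_n\|\to\infty$. Subtracting $\frac1\mu\langle\mathcal J_\lambda'(\boldsymbol u_n),\boldsymbol u_n\rangle$ and using \eqref{g.2} to bound $\int_\Omega\big(F-\tfrac1\mu\,\boldsymbol u_n\cdot\nabla F\big)$ from above yields
\[
\Big(\tfrac1p-\tfrac1\mu\Big)\Big(\int_\Omega|D\boldsymbol u_n|^p-\lambda\int_\Omega|\boldsymbol u_n|^p\Big)\le C+o(1)\|\boldsymbol u_n\|.
\]
Setting $\boldsymbol v_n:=\boldsymbol u_n/\|\boldsymbol u_n\|$ and dividing by $\|\boldsymbol u_n\|^p$ forces $1-\lambda\int_\Omega|\boldsymbol v_n|^p\le o(1)$, so the $L^p$-limit $\boldsymbol v$ of (a subsequence of) $\boldsymbol v_n$ satisfies $\int_\Omega|\boldsymbol v|^p\ge1/\lambda>0$, i.e.\ $\boldsymbol v\not\equiv\boldsymbol 0$. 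On $\{\boldsymbol v\neq\boldsymbol 0\}$ one has $|\boldsymbol u_n(x)|\to\infty$, whence $F(x,\boldsymbol u_n)/\|\boldsymbol u_n\|^p=\big(F(x,\boldsymbol u_n)/|\boldsymbol u_n|^p\big)|\boldsymbol v_n|^p\to+\infty$; using $F\ge-\psi$ with $\psi\in L^1(\Omega)$, Fatou's lemma gives $\int_\Omega F(x,\boldsymbol u_n)/\|\boldsymbol u_n\|^p\to+\infty$, which contradicts the boundedness of $\mathcal J_\lambda(\boldsymbol u_n)/\|\boldsymbol u_n\|^p=\tfrac1p-\tfrac\lambda p\int_\Omega|\boldsymbol v_n|^p-\int_\Omega F(x,\boldsymbol u_n)/\|\boldsymbol u_n\|^p$. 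Hence the sequence is bounded for every $\lambda\in\R$. Once boundedness is secured, the $(S_+)$ property of the vectorial $p$-Laplacian upgrades weak to strong convergence and yields the compactness condition; the minimax geometry transfers from Theorem \ref{thm 2} (the lower-order $p$-term does not affect the linking levels, the divergence $\mathcal J_\lambda\to-\infty$ on finite-dimensional subspaces following again from $F(x,\boldsymbol s)/|\boldsymbol s|^p\to+\infty$ via Fatou). The equivariant minimax theorem then produces an unbounded sequence of critical values, giving infinitely many weak solutions.
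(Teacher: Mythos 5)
Your overall architecture matches the paper's: recast the problem as the Euler--Lagrange system of the even functional $\mathcal J_\lambda$, dispose of $\lambda\le0$ by the equivalent norm $\|\cdot\|_\lambda$, and for $\lambda>0$ rerun the symmetric mountain pass scheme of Theorem \ref{thm 2}. Where you genuinely diverge is the key compactness step, the boundedness of Palais--Smale sequences when the quadratic-type form $\int_\Omega|D\boldsymbol u|^p-\lambda\int_\Omega|\boldsymbol u|^p$ is indefinite. The paper stays inside the Ambrosetti--Rabinowitz computation and absorbs the term $\lambda\int_\Omega|\boldsymbol u_n|^p$ by a weighted Young inequality against $\int_\Omega|\boldsymbol u_n|^q$ with a small parameter $\delta$; you instead run a normalization/contradiction argument in the spirit of Jeanjean and Liu--Wang: if $\|\boldsymbol u_n\|\to\infty$, the AR inequality forces the normalized sequence $\boldsymbol v_n=\boldsymbol u_n/\|\boldsymbol u_n\|$ to have a nontrivial $L^p$-limit, and then $F(x,\boldsymbol s)/|\boldsymbol s|^p\to+\infty$ (correctly extracted from \eqref{g.2} by integration along rays, with compactness of $S^{m-1}$ giving uniformity in the direction) together with Fatou's lemma and the $L^1$ lower bound $F\ge-\psi$ contradicts $\mathcal J_\lambda(\boldsymbol u_n)/\|\boldsymbol u_n\|^p\to0$. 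Your argument is correct and arguably more robust: it uses only $F/|\boldsymbol s|^p\to\infty$ at the crucial point and so would survive weakenings of \eqref{g.2}, whereas the paper's absorption is a one-line estimate once one accepts that the residual $\delta^{q/p}\int_\Omega|\boldsymbol u_n|^q$ term can be controlled. Note that your intermediate attempt to verify an \eqref{g.2}-type condition for $\widetilde F$ directly does not quite close (the lower bound $F\ge b_0(x)(|\boldsymbol s|/R)^\mu$ has no uniform-in-$x$ positive constant, so the degree-$\mu$ term does not dominate the degree-$p$ one for $|\boldsymbol s|$ larger than a fixed radius), but you do not actually rely on it.

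One imprecision worth fixing: the parenthetical claim that ``the lower-order $p$-term does not affect the linking levels'' is not the right justification for the mountain-pass geometry $(i)$. Near the origin the term $\frac{\lambda}{p}\int_\Omega|\boldsymbol u|^p$ is of the \emph{same} order as $\frac1p\int_\Omega|D\boldsymbol u|^p$, and on a generic finite-codimensional subspace it can destroy positivity. What is needed --- and what the paper does in its Step 1 --- is to take $W=\overline{\mathrm{span}\{\boldsymbol\varphi_j:j\ge h_0\}}$ with $\lambda_{h_0}>\lambda$, so that $\int_\Omega|D\boldsymbol u|^p-\lambda\int_\Omega|\boldsymbol u|^p\ge(1-\lambda/\lambda_{h_0})\|\boldsymbol u\|^p$ on $W$. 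Since you invoke the same spectral tail construction as in Theorem \ref{thm 2}, this is a matter of stating the reason correctly rather than a fatal gap.
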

The variational arguments leading to Theorems \ref{thm 1}, \ref{thm 2}, and \ref{thm lambda} are classical in spirit. Nevertheless, we include the details in the vectorial setting, since the operator involves the full gradient $D\boldsymbol u$ and the resulting framework will be used in the subsequent regularity and least-energy analysis.

We note that if $\boldsymbol u \in L^\infty(\Omega;\mathbb{R}^m)$, then 
$\boldsymbol u \in C^{1,\beta}(\overline{\Omega};\mathbb{R}^m)$ by 
\cite{benedetto1989boundary}, provided that $\partial\Omega \in C^{1,\alpha}$. 
The regularity theory for quasilinear elliptic systems has been extensively investigated in the case of source terms independent of the unknown, namely when 
$\boldsymbol f(x,\boldsymbol u) = \boldsymbol f(x)$; we refer to 
\cite{balci2022pointwise,benedetto1989boundary,CianchiArma2014,cianchi2019optimal,KuusiMingionePotential,montoro2025regularity,SchmidtMinimizer14,
sciunzi2025global}
and the references therein.

In the presence of a nonlinear reaction term $\boldsymbol f(x,\boldsymbol u)$, deriving a priori $L^\infty$ bounds in the vectorial setting is more delicate than in the scalar case. Although Moser iteration can still be applied, its implementation is more delicate because additional tensorial terms arise when testing with $|\boldsymbol u|^{\gamma p}\boldsymbol u$.

For $p \ge 2$, we overcome these difficulties by combining Moser iteration with Stampacchia-type truncation arguments. However, this approach does not seem to extend directly to the range $1<p<2$.

In the range $p\ge2$, this strategy allows us to establish a regularity result for quasilinear elliptic systems with nonlinear right-hand side $\boldsymbol f(x,\boldsymbol u)$, as stated in the following theorem.

\begin{theorem}\label{regularity cap1}
Let $p\ge2$ and let $\boldsymbol u \in W_0^{1,p}(\Omega, \mathbb{R}^m)$ be a weak solution of
\begin{equation*}
\begin{cases}
-\boldsymbol{\Delta}_p\boldsymbol u = \boldsymbol{f}(x, \boldsymbol u) & \text{in } \Omega, \\
\boldsymbol u = 0 & \text{on } \partial \Omega,
\end{cases}
\end{equation*}
and assume that there exist $a\in L^\infty(\Omega;\mathbb R)$, $b\ge0$ and $q\in(p,p^*)$ such that
\begin{equation}\label{subcritical growth theorem 1.5}
|\boldsymbol f(x,\boldsymbol s)|\le a(x)+b|\boldsymbol{s}|^{q-1}.
\end{equation}
Then  $\boldsymbol u \in L^\infty(\Omega, \mathbb{R}^m)$. Furthermore, if $\partial\Omega \in C^{1,\alpha}$ with $\alpha\in(0,1)$, then $\boldsymbol u \in C^{1,\beta}(\overline{\Omega}, \mathbb{R}^m)$ for some $\beta \in (0,1)$. 
\end{theorem}
The regularity result obtained here is based on the approach developed in \cite{carmona2013regularity, Vannella23}, suitably adapted to the vectorial $p$-Laplacian. 
\begin{remark}\label{oss regolarità}
  If $\boldsymbol f(x,\boldsymbol s)$ satisfies the subcritical growth \eqref{g.1} with $q\in(1,p)$, then, for every $\sigma$ such that $p-q<\sigma<p^*-q$, Young's inequality gives
    \begin{align*}
        |\boldsymbol f(x,\boldsymbol s)|
        &\le a(x)+b|\boldsymbol s|^{q-1}\\
        &\le a(x)+b\frac{\sigma}{q+\sigma-1}
        +b\frac{q-1}{q+\sigma-1}|\boldsymbol s|^{q+\sigma-1}.
    \end{align*}
    Since $p<q+\sigma<p^*$, the assumption \eqref{subcritical growth theorem 1.5} holds.
\end{remark}

Next, we focus on least energy solutions.
Let $X$ be the set of all nontrivial weak solutions:
\[X:=\left\{\boldsymbol{u}\in W_0^{1,p}(\Omega; \R^m)\setminus\{\boldsymbol 0\}\ :\ \mathcal J'(\boldsymbol{u})=0\right\},\]
and define $c:=\inf_X\mathcal J$ as the \emph{least energy level} of \eqref{P}. The set $X$ is non-empty under the assumptions of Theorems \ref{thm 1}, \ref{thm 2}, and \ref{thm lambda}.
 A \emph{least energy solution} is a weak solution $\boldsymbol u\in X$ such that $\mathcal J(\boldsymbol u)=c$.

We prove an existence and classification result for least energy solutions of a particular variational elliptic system:
\begin{theorem}\label{thm 3}
Let $q\in(1,p^*)\setminus\{p\}$ and $\lambda<\lambda_1$, where $\lambda_1>0$ is the first eigenvalue of $-\boldsymbol{\Delta}_p$ with Dirichlet boundary conditions. There exists a least energy solution $\boldsymbol{u}$ for 
    \[\tag{$\mathcal P_{p,q}$}\label{lane emeden system}
    \begin{cases}
        -\boldsymbol{\Delta}_p\boldsymbol{u}=\lambda|\boldsymbol{u}|^{p-2}\boldsymbol{u}+|\boldsymbol{u}|^{q-2}\boldsymbol u&\text{in $\Omega$,}\\
        \boldsymbol{u}=0&\text{on $\partial\Omega$}.
    \end{cases}
    \]
    Furthermore, $\boldsymbol{u}=(c^1\omega,\dots,c^m\omega)$ with $\boldsymbol{c}=(c^1,\dots,c^m)\in S^{m-1}$ and $\omega\in W_0^{1,p}(\Omega; \R)$ is a solution of
    \[
    \begin{cases}
        -\Delta_p\omega=\lambda\omega^{p-1}+\omega^{q-1}&\text{in $\Omega$,}\\
        \omega>0&\text{in $\Omega$,}\\
        \omega=0&\text{on $\partial\Omega$}.
    \end{cases}
    \]
\end{theorem}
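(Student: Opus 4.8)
The plan is to reduce \eqref{lane emeden system} to the scalar equation through a diamagnetic-type inequality and to read off the classification from its equality case. Writing $F(x,\boldsymbol s)=\tfrac{\lambda}{p}|\boldsymbol s|^{p}+\tfrac1q|\boldsymbol s|^{q}$, the energy is
\[
\mathcal J(\boldsymbol u)=\frac1p\int_\Omega|D\boldsymbol u|^{p}-\frac{\lambda}{p}\int_\Omega|\boldsymbol u|^{p}-\frac1q\int_\Omega|\boldsymbol u|^{q},
\]
and I introduce the scalar functional $\mathcal J_0:W_0^{1,p}(\Omega;\R)\to\R$,
\[
\mathcal J_0(w)=\frac1p\int_\Omega|\nabla w|^{p}-\frac{\lambda}{p}\int_\Omega|w|^{p}-\frac1q\int_\Omega|w|^{q}.
\]
The engine is the pointwise inequality $\bigl|\nabla|\boldsymbol u|\bigr|\le|D\boldsymbol u|$, valid a.e.\ for every $\boldsymbol u\in W_0^{1,p}(\Omega;\R^m)$: setting $w=|\boldsymbol u|$ one has $\nabla w=\tfrac1w\sum_i u^i\nabla u^i$ a.e.\ on $\{w>0\}$ (and $\nabla w=\boldsymbol 0$ a.e.\ on $\{w=0\}$), and applying the Cauchy--Schwarz inequality in the index $i$ for each spatial direction gives $|\nabla w|^2\le\sum_i|\nabla u^i|^2=|D\boldsymbol u|^2$. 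Since $|w|=|\boldsymbol u|$, I record two facts used throughout: the fibrewise inequality $\mathcal J_0(t|\boldsymbol u|)\le\mathcal J(t\boldsymbol u)$ for all $t\ge0$; and, for a constant vector $\boldsymbol c\in S^{m-1}$, the exact identity $\mathcal J(t\boldsymbol c\,w)=\mathcal J_0(tw)$, together with the elementary computation showing that $\boldsymbol c\,w$ solves \eqref{lane emeden system} if and only if $w$ solves $-\Delta_p w=\lambda|w|^{p-2}w+|w|^{q-2}w$.

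Next I would show that the least energy level $c=\inf_X\mathcal J$ coincides with the scalar one $c_0$. For $1<q<p$ both functionals are coercive and weakly lower semicontinuous (using $\lambda<\lambda_1$ and the compact subcritical embedding $W_0^{1,p}\hookrightarrow L^q$), so the least energy levels are global minima; for $p<q<p^*$ I would use the Nehari/mountain pass characterizations $c=\inf_{\boldsymbol u\neq\boldsymbol0}\max_{t\ge0}\mathcal J(t\boldsymbol u)$ and $c_0=\inf_{w\neq0}\max_{t\ge0}\mathcal J_0(tw)$, the Palais--Smale condition being provided by \eqref{g.2} and subcriticality as in Theorems \ref{thm 2}--\ref{thm lambda}. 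In both regimes the fibrewise inequality yields $c_0\le c$, while the exact identity evaluated on $\boldsymbol c\,\omega$ gives $c\le c_0$; hence $c=c_0$. Here $\omega$ is a scalar least energy solution, which exists by the $m=1$ instances of Theorems \ref{thm 1}, \ref{thm 2} and \ref{thm lambda}; replacing $\omega$ by $|\omega|$ does not increase the energy, so $\omega$ may be taken nonnegative and nontrivial, and the strong maximum principle then gives $\omega>0$ in $\Omega$. Consequently $\boldsymbol c\,\omega\in X$ realizes $c$, which proves existence.

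For the classification I would take an arbitrary least energy solution $\boldsymbol u$ and set $w=|\boldsymbol u|$. In the sublinear regime $c_0\le\mathcal J_0(w)\le\mathcal J(\boldsymbol u)=c=c_0$ directly; in the superlinear regime, along the ray $t\mapsto\mathcal J(t\boldsymbol u)$ the maximum is attained at $t=1$ (since $\boldsymbol u$ lies on the Nehari manifold and, because $\lambda<\lambda_1$, each fibring map has a unique positive critical point equal to its strict maximum), and comparing the two fibres forces their common maximizer $t=1$ and $\mathcal J_0(w)=\mathcal J(\boldsymbol u)=c$. In either case $\mathcal J_0(w)=\mathcal J(\boldsymbol u)$, and since the only differing term is the gradient term,
\[
\int_\Omega|\nabla w|^{p}=\int_\Omega|D\boldsymbol u|^{p}.
\]
With the pointwise bound $|\nabla w|\le|D\boldsymbol u|$ this gives $|\nabla w|=|D\boldsymbol u|$ a.e., i.e.\ equality in the Cauchy--Schwarz step at a.e.\ point of $\{w>0\}$, so for every $\alpha$ the vector $(\partial_\alpha u^1,\dots,\partial_\alpha u^m)$ is parallel to $\boldsymbol u$. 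Writing $\boldsymbol u=w\,\boldsymbol\theta$ with $\boldsymbol\theta=\boldsymbol u/|\boldsymbol u|\in S^{m-1}$ and using $\sum_i\theta^i\partial_\alpha\theta^i=0$, this collapses to $\nabla\boldsymbol\theta=\boldsymbol0$ a.e.\ on $\{w>0\}$. As above $w$ is itself a nonnegative, nontrivial least energy solution of the scalar equation, so $w>0$ on the connected set $\Omega$ by the strong maximum principle; hence $\boldsymbol\theta$ is constant, $\boldsymbol\theta\equiv\boldsymbol c\in S^{m-1}$, and $\boldsymbol u=\boldsymbol c\,\omega$ with $\omega:=w$ solving the stated scalar problem.

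The main obstacle is this rigidity step: promoting the integral identity $\int_\Omega|\nabla w|^{p}=\int_\Omega|D\boldsymbol u|^{p}$ to the pointwise conclusion $\nabla\boldsymbol\theta=\boldsymbol0$ and then to the global statement $\boldsymbol\theta\equiv\boldsymbol c$. This requires a careful treatment of the equality case of the diamagnetic inequality for Sobolev maps, of the chain rule for $w=|\boldsymbol u|$ across $\{\boldsymbol u=\boldsymbol0\}$, and of the connectedness of $\{w>0\}$, which is available only after $\omega>0$ is established via the strong maximum principle; the regularity needed to justify the pointwise computations follows from Theorem \ref{regularity cap1} and the $C^{1,\alpha}$ theory of \cite{benedetto1989boundary}. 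The secondary point requiring care, the identity $c=c_0$ and in particular the coincidence of the maxima of the two fibring maps in the superlinear case, follows cleanly from the exact identity on the ray $\{t\boldsymbol c\,\omega:t\ge0\}$.
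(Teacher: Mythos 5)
Your proposal is correct and follows essentially the same route as the paper: the pointwise inequality $\bigl|\nabla|\boldsymbol u|\bigr|\le|D\boldsymbol u|$, comparison of the vectorial and scalar least energy levels, and the equality case of Cauchy--Schwarz (which the paper phrases via Lagrange's identity) combined with positivity of $|\boldsymbol u|$ via Harnack/strong maximum principle to force $\boldsymbol u/|\boldsymbol u|$ to be constant. The only organizational difference is that you run the level comparison through the Nehari/fibering characterization, whereas the paper minimizes the Rayleigh-type quotient $Q_{p,q}$ and links its minimizers to least energy solutions via Proposition~\ref{minimizer}; these are equivalent reformulations.
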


Our classification result is inspired by \cite{hynd2023uniqueness}, which addresses an eigenvalue problem, and by \cite{correia2016semitrivial}, which deals with a cooperative system involving the Laplacian.

\begin{remark}
We collect a few additional observations related to Theorem \ref{thm 3}:
\begin{itemize}
    \item When $q=p$, the problem reduces to the eigenvalue case, which has been investigated in detail in \cite{hynd2023uniqueness}.
    \item If $q\in(p,p^*)$, Theorem \ref{thm 2} ensures the existence of infinitely many solutions.
    \item Concerning the regularity of weak solutions, we recall that any weak solution $\boldsymbol{u}$ of \eqref{lane emeden system} is bounded by the regularity theory for the scalar $p$-Laplacian (see \cite{pucci2008regularity} and \cite[Lemma 2.2]{pardo2024priori}). Moreover, if $\partial\Omega\in C^{1,\alpha}$, one deduces that $\boldsymbol{u}\in C^{1,\beta}(\overline\Omega; \R^m)$ thanks to \cite{benedetto1989boundary}.
    \item A least energy solution is not necessarily positive, even though $\omega$ is positive, since some of the coefficients $c^i$ may be non-positive.
    \item Finally, we observe that in general $\boldsymbol{c}\neq0$, but some of the components $c^i$ may vanish for certain $i\in\{1,\dots,m\}$. 
\end{itemize}
\end{remark}
\subsection{Notation}
Let $X$ be a Banach space, and denote by $\|\cdot\|_X$ its norm. 
If $X=L^p(\Omega;\mathbb{R}^m)$, with $\Omega \subset \mathbb{R}^N$, we set
\[
|\boldsymbol u|_p := \|\boldsymbol u\|_{L^p(\Omega;\mathbb{R}^m)}, \quad \forall\, u \in L^p(\Omega;\mathbb{R}^m).
\]
If instead $X=W_0^{1,p}(\Omega;\mathbb{R}^m)$, we simply write
\[
\|\boldsymbol u\| := \|\boldsymbol u\|_{W_0^{1,p}(\Omega;\mathbb{R}^m)}, \quad \forall\, u \in W_0^{1,p}(\Omega;\mathbb{R}^m).
\]
We denote by $W^{-1,p'}(\Omega;\mathbb{R}^m)$ the dual space 
$(W_0^{1,p}(\Omega;\mathbb{R}^m))'$, where $\tfrac{1}{p} + \tfrac{1}{p'} = 1$. 
When no confusion arises, we simply write $W_0^{1,p}(\Omega)$ and $L^p(\Omega)$ 
instead of $W_0^{1,p}(\Omega;\mathbb{R}^m)$ and $L^p(\Omega;\mathbb{R}^m)$. 

We also define 
\(
S^{m-1}:=\left\{\boldsymbol z\in\R^m\ :\ |\boldsymbol z|=1\right\}.
\)
\section{Preliminaries}
We recall some basic definitions and results of critical point theory of differentiable functionals.
\begin{definition}
    Let $X$ be a Banach space and let $J:X\to\R$ be a $C^1$-functional. Let $\{u_n\}\subset X$ be a sequence and let $c\in\R$. We say that $\{u_n\}$ is a Palais-Smale sequence at level $c$ if 
    \begin{itemize}
        \item $J(u_n)\to c$ in $\R$,
        \item $J'(u_n)\to0$ in $X'$, where $X'$ denotes the dual space of $X$.
    \end{itemize}
\end{definition}
\begin{definition}
    Let $X$ be a Banach space and let $J:X\to\R$ be a $C^1$-functional. We say that $J$ satisfies the Palais-Smale condition (at level $c$) if any Palais-Smale sequence (at level $c$) admits a convergent subsequence in $X$.
\end{definition}

\begin{theorem}[\cite{ambrosetti1973dual}, \cite{rabinowitz1}]\label{MP}
    Let $X$ be a Banach space and let $J:X\to\R$ be an even $C^1$-functional. Assume that  $J(0)=0$, and that there exist  $\alpha,\rho>0$ and a subspace $W\subset X$ of finite codimension such that
    \begin{itemize}
        \item[$(i)$] $J(u)\ge\alpha>0$ for every $u\in\partial B_\rho\cap W$,
        \item[$(ii)$] for every subspace $V\subset X$ of finite dimension, there exists $R>0$ such that  $J(v)\le0$ in $B_R^c\cap V$,
        \item[$(iii)$] the functional $J$ satisfies the Palais-Smale condition.
    \end{itemize}
    Then there exists a sequence of critical points $\{u_n\}$ such that $J(u_n)\to+\infty$. 
\end{theorem}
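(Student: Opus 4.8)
The plan is to prove Theorem~\ref{MP} by a $\mathbb Z_2$-equivariant minimax argument, using the Krasnoselskii genus $\gamma$ as the underlying index theory together with an odd deformation lemma, available because $J$ is even and satisfies (PS). Write $d:=\operatorname{codim}W<\infty$ and fix a splitting $X=W\oplus V$ with $\dim V=d$. Two elementary topological facts will be used repeatedly: the \emph{intersection lemma} — if $A$ is closed, symmetric and $\gamma(A)>d$, then $A\cap W\neq\emptyset$ (otherwise the projection $X\to V$ along $W$ restricts to an odd continuous map $A\to V\setminus\{0\}$, forcing $\gamma(A)\le\gamma(V\setminus\{0\})=d$) — and the Borsuk--Ulam--type estimate $\gamma(A\cap W)\ge\gamma(A)-d$. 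Both are standard properties of $\gamma$ for a finite-codimensional subspace.

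Since $J$ is even, the negative pseudo-gradient flow can be chosen odd, so for each regular $c$ (or away from a symmetric neighbourhood of the critical set $K_c:=\{u:J'(u)=0,\ J(u)=c\}$, which is compact by (PS)) one obtains an odd homeomorphism $\eta$ fixing $J^0:=\{J\le 0\}$, with $J(\eta(u))\le J(u)$ and $\eta(J^{c+\varepsilon})\subset J^{c-\varepsilon}$. Let $\mathcal H$ be the group of odd homeomorphisms of $X$ fixing $J^0$ pointwise; it is closed under composition and contains every such $\eta$. The naive genus minimax $\inf_{\gamma(A)\ge k}\sup_AJ$ cannot work: by (ii) a large sphere in a finite-dimensional subspace has arbitrarily large genus while $\sup_AJ\le0$, which pushes the levels down. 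To correct this I would use Benci's \emph{pseudo-index} relative to the linking sphere $S:=\partial B_\rho\cap W$: for closed symmetric $A\not\ni0$ set
\[
i^*(A):=\min_{h\in\mathcal H}\gamma\big(h(A)\cap S\big),\qquad
c_k:=\inf_{i^*(A)\ge k}\ \sup_{u\in A}J(u).
\]
As $\mathrm{id}\in\mathcal H$, any admissible $A$ with $i^*(A)\ge1$ meets $S$, where $J\ge\alpha$ by (i); hence $c_k\ge c_1\ge\alpha>0$. For finiteness, given $k$ choose $Z_k\subset W$ with $\dim Z_k=k$, set $\tilde V_k:=V\oplus Z_k$, use (ii) to pick $R_k$ with $J\le0$ on $\tilde V_k\setminus B_{R_k}$, and take the symmetric set capped on $\tilde V_k$: the intersection estimate forces $i^*\ge k$, while compactness gives $\sup J<\infty$, so $c_k<\infty$. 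Finally $\{i^*\ge k+1\}\subset\{i^*\ge k\}$ gives $c_k\le c_{k+1}$.

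Each finite $c_k$ is a critical value by the usual contradiction: if $c_k$ were regular, choose $A$ with $i^*(A)\ge k$ and $\sup_AJ\le c_k+\varepsilon$, and apply $\eta\in\mathcal H$; since $h\circ\eta\in\mathcal H$ we get $i^*(\eta(A))\ge i^*(A)\ge k$, while $\sup_{\eta(A)}J\le c_k-\varepsilon$, contradicting the definition of $c_k$. The same scheme together with the subadditivity $i^*(\overline{A\setminus N})\ge i^*(A)-\gamma(\overline N)$ yields the multiplicity relation $\gamma(K_c)\ge p+1$ whenever $c=c_k=\dots=c_{k+p}$. To conclude $c_k\to+\infty$, argue by contradiction: if $c_k\uparrow\bar c<\infty$, then (PS) makes the critical set $K_{[\alpha,\bar c]}:=\{u:J'(u)=0,\ \alpha\le J(u)\le\bar c\}$ compact and symmetric with $0\notin K_{[\alpha,\bar c]}$, hence of finite genus $\bar n$; taking a symmetric neighbourhood $N$ with $\gamma(\overline N)=\bar n$ and deforming $J^{\bar c+\varepsilon}\setminus N$ into $J^{\bar c-\varepsilon}$, the subadditivity estimate gives $c_{k-\bar n}\le\bar c-\varepsilon$ for $k$ large, contradicting $c_{k-\bar n}\to\bar c$. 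Thus $\{c_k\}$ is an unbounded sequence of critical values, producing critical points $u_n$ with $J(u_n)\to+\infty$.

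I expect the main obstacle to be not the deformation bookkeeping but the interaction between hypotheses (i) and (ii) mediated by the topology: the Borsuk--Ulam/intersection estimate is precisely what prevents the finite-dimensional negativity in (ii) from collapsing the minimax levels, and it is the ingredient that simultaneously makes the families $\{i^*\ge k\}$ nonempty (so that each $c_k$ is a genuine, finite critical value rather than $+\infty$) and, combined with the compactness furnished by (PS), forces the divergence $c_k\to+\infty$.
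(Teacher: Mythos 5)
The paper does not prove Theorem \ref{MP} at all: it is quoted as a known result, with the proof delegated to \cite{ambrosetti1973dual} and to Rabinowitz's monograph \cite{rabinowitz1} (where it is essentially Theorem 9.12, proved via the minimax classes $\Gamma_j$ of images $h(\overline{B}_{R_m}\cap E_m\setminus Y)$ under odd maps $h$ equal to the identity on the large sphere, with $\gamma(Y)$ controlled). Your proposal follows the other classical route, Benci's pseudo-index $i^*(A)=\inf_{h\in\mathcal H}\gamma(h(A)\cap S)$ relative to $S=\partial B_\rho\cap W$ (as in Bartolo--Benci--Fortunato), and as an outline it is correct: the monotonicity $i^*(\eta(A))\ge i^*(A)$, the bound $c_k\ge\alpha$, the deformation contradiction for criticality, and the divergence argument via $i^*(\overline{A\setminus N})\ge i^*(A)-\gamma(\overline N)$ with $N$ a genus-preserving neighbourhood of the compact critical set are all sound, and your scheme buys the multiplicity estimate $\gamma(K_c)\ge p+1$ for free, which the bare statement does not even claim. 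One step is asserted rather than proved, and it is the only genuinely nontrivial one: the lower bound $i^*(\overline{B}_{R_k}\cap\widetilde V_k)\ge k$ does \emph{not} follow from the two facts you list (both of which only concern intersecting a set of known genus with $W$); you first need $\gamma\bigl(h(\overline{B}_{R_k}\cap\widetilde V_k)\cap\partial B_\rho\bigr)\ge k+d$ for every $h\in\mathcal H$, which is where Borsuk's theorem actually enters: since $J\le0$ on $\widetilde V_k\setminus B_{R_k}$, every $h\in\mathcal H$ fixes $\partial B_{R_k}\cap\widetilde V_k$ pointwise, so $O:=\{u\in B_{R_k}\cap\widetilde V_k:\|h(u)\|<\rho\}$ is a bounded symmetric open neighbourhood of $0$ in the $(k+d)$-dimensional space $\widetilde V_k$, whence $\gamma(h(\partial O))=\gamma(\partial O)=k+d$ and $h(\partial O)\subset h(\overline{B}_{R_k}\cap\widetilde V_k)\cap\partial B_\rho$; only then does your estimate $\gamma(A\cap W)\ge\gamma(A)-d$ yield $i^*\ge k$. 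With that lemma inserted (and ``$\min$'' replaced by ``$\inf$'' in the definition of $i^*$, plus the interval version of the odd deformation lemma in the divergence step, since the levels $c_k$ accumulate at $\bar c$), your proof is complete and is a legitimate, standard alternative to the argument in the cited sources.
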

We endow $W_0^{1,p}(\Omega; \R^m)$ with the norm
\[
\|\boldsymbol{u}\|:=\left(\int_\Omega |D\boldsymbol u|^p\right)^{\frac1p}=\left(\int_\Omega\left(|\nabla u^1|^2+\dots+|\nabla u^m|^2\right)^{\frac p2}\right)^{\frac1p}.
\]
Let $\mathcal J:W_0^{1,p}(\Omega; \R^m)\to\R$ be the energy functional of \eqref{P}. We start proving the differentiability of $\mathcal J$ in this section. 
\begin{proposition}\label{frechet diff}
Assume that \eqref{g.1} holds.
   We have that $\mathcal J$ is a $C^1$-functional and 
    \[\langle\mathcal J'(\boldsymbol u),\boldsymbol{\varphi}\rangle=\int_\Omega |D\boldsymbol u|^{p-2}D\boldsymbol u\cdot D\boldsymbol\varphi-\int_\Omega \boldsymbol f(x,\boldsymbol u)\cdot\boldsymbol\varphi,\ \ \forall\  \boldsymbol u,\boldsymbol\varphi\in W_0^{1,p}(\Omega; \R^m),\]
    where the scalar product $D\boldsymbol u\cdot D\boldsymbol \varphi$ is defined as follows:
    \[D\boldsymbol u\cdot D\boldsymbol\varphi=\sum_{i=1}^m\nabla u^i\cdot\nabla\varphi^i=\sum_{j=1}^N\sum_{i=1}^m\frac{\partial u^i}{\partial x_j}\cdot\frac{\partial\varphi^i}{\partial x_j}.\]
\end{proposition}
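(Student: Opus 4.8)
The plan is to split $\mathcal J=\mathcal I-\mathcal G$, where $\mathcal I(\boldsymbol u)=\frac1p\int_\Omega|D\boldsymbol u|^p$ and $\mathcal G(\boldsymbol u)=\int_\Omega F(x,\boldsymbol u)$, and to show that each summand is of class $C^1$ on $W_0^{1,p}(\Omega;\R^m)$ by establishing Gâteaux differentiability together with continuity of the Gâteaux derivative. By the classical criterion (Gâteaux differentiable with continuous derivative implies Fréchet differentiable) this yields the Fréchet differentiability of $\mathcal J$, and the stated formula for $\langle\mathcal J'(\boldsymbol u),\boldsymbol\varphi\rangle$ is then read off by linearity from the two Gâteaux computations.

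For $\mathcal I$, I would first note that $\xi\mapsto\frac1p|\xi|^p$ is $C^1$ on the space $\R^{m\times N}$ of Jacobian matrices, with gradient $\xi\mapsto|\xi|^{p-2}\xi$ (continuous at $0$ since $p>1$). Differentiating $t\mapsto\frac1p|D\boldsymbol u+tD\boldsymbol\varphi|^p$ and dominating its $t$-derivative, for $|t|\le1$, by $C(|D\boldsymbol u|+|D\boldsymbol\varphi|)^{p-1}|D\boldsymbol\varphi|\in L^1(\Omega)$ (Hölder, since $(|D\boldsymbol u|+|D\boldsymbol\varphi|)^{p-1}\in L^{p'}$ and $|D\boldsymbol\varphi|\in L^p$), the dominated convergence theorem gives $\langle\mathcal I'(\boldsymbol u),\boldsymbol\varphi\rangle=\int_\Omega|D\boldsymbol u|^{p-2}D\boldsymbol u\cdot D\boldsymbol\varphi$. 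Continuity of $\mathcal I':W_0^{1,p}\to W^{-1,p'}$ then follows from Hölder together with the continuity of the Nemytskii operator $\xi\mapsto|\xi|^{p-2}\xi$ from $L^p(\Omega;\R^{m\times N})$ into $L^{p'}(\Omega;\R^{m\times N})$: if $\boldsymbol u_n\to\boldsymbol u$ in $W_0^{1,p}$, then $|D\boldsymbol u_n|^{p-2}D\boldsymbol u_n\to|D\boldsymbol u|^{p-2}D\boldsymbol u$ in $L^{p'}$, whence $\|\mathcal I'(\boldsymbol u_n)-\mathcal I'(\boldsymbol u)\|_{W^{-1,p'}}\to0$.

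For $\mathcal G$, the growth assumption \eqref{g.1} is what guarantees integrability. The $t$-derivative of $F(x,\boldsymbol u+t\boldsymbol\varphi)$ equals $\nabla F(x,\boldsymbol u+t\boldsymbol\varphi)\cdot\boldsymbol\varphi$ and is dominated, for $|t|\le1$, by $\bigl(a(x)+b(|\boldsymbol u|+|\boldsymbol\varphi|)^{q-1}\bigr)|\boldsymbol\varphi|\in L^1(\Omega)$: the term $a|\boldsymbol\varphi|$ is controlled by Hölder using $a\in L^{(p^*)'}$ (guaranteed by $r\ge\frac{Np}{Np-N+p}=(p^*)'$) and $\boldsymbol\varphi\in L^{p^*}$, while the power term is controlled by Hölder with exponents $q'$ and $q$, since $q<p^*$ gives $W_0^{1,p}\hookrightarrow L^q$. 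Dominated convergence yields $\langle\mathcal G'(\boldsymbol u),\boldsymbol\varphi\rangle=\int_\Omega\boldsymbol f(x,\boldsymbol u)\cdot\boldsymbol\varphi$. For the continuity of $\mathcal G'$ I would factor it as
\[
W_0^{1,p}(\Omega;\R^m)\hookrightarrow L^q(\Omega;\R^m)\xrightarrow{\,N_{\boldsymbol f}\,}L^{(p^*)'}(\Omega;\R^m)\hookrightarrow W^{-1,p'}(\Omega;\R^m),
\]
where the first embedding is continuous because $q<p^*$, the middle map $N_{\boldsymbol f}$ is the Nemytskii operator of $\boldsymbol f$, and the last embedding holds by duality from $W_0^{1,p}\hookrightarrow L^{p^*}$. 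The crucial point is that $N_{\boldsymbol f}$ maps $L^q$ continuously into $L^{(p^*)'}$: by Krasnoselskii's theorem this holds since the growth exponent satisfies $q-1\le q/(p^*)'$ (equivalent to $q\le p^*$) and $a\in L^{(p^*)'}$. Hence $\boldsymbol u_n\to\boldsymbol u$ in $W_0^{1,p}$ forces $\boldsymbol u_n\to\boldsymbol u$ in $L^q$, then $\boldsymbol f(\cdot,\boldsymbol u_n)\to\boldsymbol f(\cdot,\boldsymbol u)$ in $L^{(p^*)'}$, and finally $\mathcal G'(\boldsymbol u_n)\to\mathcal G'(\boldsymbol u)$ in $W^{-1,p'}$.

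The only genuinely delicate step is this last one: the subcritical exponent $q<p^*$ and the integrability threshold $r\ge(p^*)'$ must be combined so that the target of the Nemytskii operator matches the dual space $W^{-1,p'}$. I expect the main obstacle to be the careful bookkeeping of exponents ensuring $(q-1)(p^*)'\le q$ (again equivalent to $q\le p^*$), which is precisely what lets one dominate $|\boldsymbol f(\cdot,\boldsymbol u_n)|^{(p^*)'}$ by a fixed $L^1$ function along an a.e.-convergent subsequence and thereby invoke the continuity of $N_{\boldsymbol f}$. Once $\mathcal I$ and $\mathcal G$ are both shown to be $C^1$, the formula for $\langle\mathcal J'(\boldsymbol u),\boldsymbol\varphi\rangle$ follows immediately.
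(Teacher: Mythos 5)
Your proof is correct and follows essentially the same route as the paper's: the same splitting of $\mathcal J$ into the $p$-Dirichlet part and the potential part, Gâteaux differentiation justified by Lagrange/dominated convergence, and continuity of the resulting derivatives via H\"older together with the Nemytskii operator. The one (welcome) refinement is that you send $\boldsymbol f(\cdot,\boldsymbol u_n)$ into $L^{(p^*)'}$ rather than $L^{p'}$, which makes the exponent bookkeeping close for every $q<p^*$, whereas the paper's domination of $|\boldsymbol u_n|^{\frac{qp}{p-1}}$ needs $\frac{qp}{p-1}<p^*$ and so, as written, only covers $q<\frac{N(p-1)}{N-p}$.
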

\begin{proof}
 The proof is standard, but we provide it for the sake of clarity.\\
 \textbf{Step 1.} 
 The functional $J_1(\boldsymbol u)=\frac1p\int_\Omega|D\boldsymbol u|^p$ is of class $C^1$.
 
For every $\boldsymbol\varphi\in W_0^{1,p}(\Omega; \R^m)$, we have that
 \begin{align*}
     \lim_{t^i\to0}\frac{|D(\boldsymbol u+t^i\varphi^i\boldsymbol e_i)|^p-|D\boldsymbol u|^p}{t^i}=p|D\boldsymbol u|^{p-2}\nabla u^i\cdot\nabla\varphi^i,
 \end{align*}
 for a.e. $x\in\Omega$ and for $i=1,\dots,m$, where $\{\boldsymbol e_1,\dots,\boldsymbol e_m\}$ is the canonical basis of $\R^m$.
 
  By the mean value theorem, there exists $\theta^i\in\R$ such that $|\theta^i|\le |t^i|$ and 
\begin{align*}
     \left|\frac{|D\boldsymbol u+t^iD(\varphi^i\boldsymbol e_i)|^p-|D\boldsymbol u|^p}{t^i}\right|&= p\left||D\boldsymbol u+\theta^iD(\varphi^i\boldsymbol e_i)|^{p-2}\left(\nabla u^i+\theta^i\nabla \varphi^i\right)\cdot\nabla\varphi^i\right|.
 \end{align*}
 By the Cauchy--Schwarz inequality,
 \begin{align*}
     \left(\nabla u^i+\theta^i\nabla\varphi^i\right)\cdot\nabla\varphi^i&\le|\nabla u^i+\theta^i\nabla\varphi^i|\cdot|\nabla\varphi^i|\\
    &\le |D(\boldsymbol u+\theta^i\varphi^i\boldsymbol e_i)|\cdot|\nabla\varphi^i|.
 \end{align*}
 Thus,
 \[     \left|\frac{|D\boldsymbol u+t^iD(\varphi^i\boldsymbol e_i)|^p-|D\boldsymbol u|^p}{t^i}\right|\le p|D\boldsymbol u+\theta^i D(\varphi^i\boldsymbol e_i)|^{p-1}\cdot|\nabla\varphi^i|.\]
 We recall that
 \[|a+b|^{\gamma}\le C_{\gamma}(|a|^{\gamma}+|b|^\gamma),\qquad \forall\ a,b\in\R,\ \gamma>0.\]
 Therefore,
 \[ \left|\frac{|D\boldsymbol u+t^iD(\varphi^i\boldsymbol e_i)|^p-|D\boldsymbol u|^p}{t^i}\right|\le C\left( |D\boldsymbol u|^{p-1}|\nabla\varphi^i|+|\nabla\varphi^i|^p\right).\]
 By the dominated convergence theorem, the G\^ateaux derivative $J_1'(\boldsymbol u):W_0^{1,p}(\Omega; \R^m)\to\R$ exists and
\begin{align*}
    \langle J_1'(\boldsymbol u), \varphi^i\boldsymbol e_i\rangle=\int_\Omega |D\boldsymbol u|^{p-2}\nabla u^i\cdot \nabla\varphi^i.
\end{align*}
Thus,
\[\langle J_1'(\boldsymbol u),\boldsymbol{\varphi}\rangle=\sum_{i=1}^m\langle J_1'(\boldsymbol u),\varphi^i\boldsymbol e_i\rangle=\int_\Omega |D\boldsymbol u|^{p-2}D\boldsymbol u\cdot D\boldsymbol\varphi.\]
We claim that $J_1'$ is continuous. Indeed, let $\{\boldsymbol u_n\}\subset W_0^{1,p}(\Omega; \R^m)$ be such that
$\boldsymbol u_n\to\boldsymbol u$ in $W_0^{1,p}(\Omega; \R^m)$. By H\"older's inequality,
\[\left|\int_\Omega\left(|D\boldsymbol u_n|^{p-2}D\boldsymbol u_n -|D\boldsymbol u|^{p-2}D\boldsymbol u\right) D\boldsymbol\varphi\right|\le\left||D\boldsymbol u_n|^{p-2}D\boldsymbol u_n-|D\boldsymbol u|^{p-2}D\boldsymbol u\right|_{\frac{p}{p-1}}\cdot|D\boldsymbol\varphi|_p.\]
Since $|D\boldsymbol u_n|\to |D\boldsymbol u|$ in $L^{p}(\Omega)$, \[|D\boldsymbol u_n|^{p-2}D\boldsymbol u_n\to |D\boldsymbol u|^{p-2}D\boldsymbol u\quad \text{in $L^{\frac{p}{p-1}}(\Omega)$},\]
and the continuity follows. Hence $J_1$ is Fréchet differentiable and the Fréchet derivative coincides with the G\^ateaux derivative.  \\
\textbf{Step 2.} The functional $\mathcal J$ is of class $C^1$. 

Let $J_2(\boldsymbol u)=\mathcal J(\boldsymbol u)-J_1(\boldsymbol u)$. By the mean value theorem and \eqref{g.1}, we obtain
\begin{align*}
    \left|\frac{F(x,\boldsymbol u+t^i\varphi_i\boldsymbol e_i)-F(x,\boldsymbol{u})}{t^i}\right|&=|f^i(x,\boldsymbol{u}+\theta^i\varphi_i\boldsymbol e_i)\varphi^i|\\
    &\le a(x)|\varphi^i|+b|\boldsymbol{u}+\theta^i\varphi_i\boldsymbol e_i|^{q-1}|\varphi^i|\in L^1(\Omega),
\end{align*}
for every $\boldsymbol{\varphi}\in W_0^{1,p}(\Omega; \R^m)$ and for every $|\theta^i|\le |t^i|$. Hence, as in Step 1, the G\^ateaux derivative $J_2'(\boldsymbol u):W_0^{1,p}(\Omega; \R^m)\to\R$ exists and
\[\langle J_2'(\boldsymbol u),\boldsymbol\varphi\rangle=-\int_\Omega\boldsymbol f(x,\boldsymbol u)\cdot\boldsymbol\varphi.\]
It remains to prove that $J_2':W_0^{1,p}(\Omega; \R^m)\to W^{-1,p'}(\Omega;\R^m)$ is continuous.

Let $\{\boldsymbol u_n\}\subset W_0^{1,p}(\Omega; \R^m)$ be such that $\boldsymbol u_n\to\boldsymbol u$ in $W_0^{1,p}(\Omega; \R^m)$. Set $s:=(p^*)'$. Since $q<p^*$, we have $(q-1)s<p^*$. Hence, by Sobolev's embedding,
\[
\boldsymbol u_n\to\boldsymbol u\quad \text{in $L^{(q-1)s}(\Omega;\R^m)$}.
\]
Moreover, up to a subsequence, $\boldsymbol u_n(x)\to\boldsymbol u(x)$ for a.e. $x\in\Omega$ and there exist $c_3>0$ and $\omega_3\in L^1(\Omega)$ such that
\[
|\boldsymbol u_n|^{(q-1)s}\le c_3\omega_3\quad \text{a.e. in $\Omega$}.
\]
By \eqref{g.1},
\[
|\boldsymbol f(x,\boldsymbol u_n)|^s
\le C\left(|a(x)|^s+b^s|\boldsymbol u_n|^{(q-1)s}\right).
\]
Since $a\in L^r(\Omega)$ with $r\ge s=(p^*)'$, the right-hand side is dominated by an $L^1(\Omega)$ function. Therefore, by the dominated convergence theorem,
\[
\boldsymbol f(x,\boldsymbol u_n)\to\boldsymbol f(x,\boldsymbol u)
\quad \text{in $L^{(p^*)'}(\Omega;\R^m)$}.
\]
Finally, for every $\boldsymbol\varphi\in W_0^{1,p}(\Omega;\R^m)$, H\"older's inequality and Sobolev's embedding give
\[
\left|\langle J_2'(\boldsymbol u_n)-J_2'(\boldsymbol u),\boldsymbol\varphi\rangle\right|
\le \left|\boldsymbol f(x,\boldsymbol u_n)-\boldsymbol f(x,\boldsymbol u)\right|_{(p^*)'}|\boldsymbol\varphi|_{p^*}
\le C\left|\boldsymbol f(x,\boldsymbol u_n)-\boldsymbol f(x,\boldsymbol u)\right|_{(p^*)'}\|\boldsymbol\varphi\|.
\]
Thus $J_2'$ is continuous. Consequently, $\mathcal J$ is of class $C^1$, and the proof is complete.
\end{proof}
\begin{remark}
For every $\lambda\in\R$, set
\[
G_\lambda(x,\boldsymbol u):=\frac{\lambda}{p}|\boldsymbol u|^p+F(x,\boldsymbol u).
\]
By Young's inequality, the additional term $\lambda|\boldsymbol u|^{p-2}\boldsymbol u$ still satisfies the growth condition \eqref{g.1}. Hence $G_\lambda$ satisfies the same assumptions as $F$, and Proposition \ref{frechet diff} applies to the perturbed functional
\[
\mathcal J_\lambda(\boldsymbol{u})=\frac1p\int_\Omega|D\boldsymbol{u}|^p-\int_\Omega G_\lambda(x,\boldsymbol u).
\]
In particular, $\mathcal J_\lambda\in C^1$ for every $\lambda\in\R$.
\end{remark}
\begin{remark}
By Proposition \ref{frechet diff}, a function $\boldsymbol u\in W_0^{1,p}(\Omega;\R^m)$ is a critical point of $\mathcal J$ if and only if
\[
\int_\Omega |D\boldsymbol u|^{p-2}D\boldsymbol u\cdot D\boldsymbol\varphi
=\int_\Omega \boldsymbol f(x,\boldsymbol u)\cdot\boldsymbol\varphi,
\qquad \forall\,\boldsymbol\varphi\in W_0^{1,p}(\Omega;\R^m).
\]
Therefore, the critical points of $\mathcal J$ are precisely the weak solutions of \eqref{P}.
\end{remark}

We shall also use the finite-dimensional decomposition associated with the variational construction of the eigenvalues for the vectorial $p$-Laplacian. Let
\[
0<\lambda_1\le\lambda_2\le\dots\le\lambda_k\le\dots\to+\infty
\]
be the corresponding variational eigenvalues under homogeneous Dirichlet boundary conditions. We denote by $\boldsymbol\varphi_k\in W_0^{1,p}(\Omega;\R^m)$ the associated pseudo-eigenfunctions. For $k\ge2$, they are obtained through the minimization procedure
\begin{equation}\label{lambdak}
\lambda_k:=\inf\left\{\int_\Omega|D\boldsymbol u|^p\ :\  \boldsymbol u\in S_p,\  \int_\Omega|\boldsymbol\varphi_j|^{p-2}\boldsymbol\varphi_j\cdot\boldsymbol u=0\quad  \forall\  j=1,\dots,k-1
\right\}.
\end{equation}
Equivalently, $\boldsymbol\varphi_k$ is a pseudo-eigenfunction associated with the vectorial problem
\[
\begin{cases}
    -\boldsymbol{\Delta}_p\boldsymbol\varphi_k=\lambda_k|\boldsymbol\varphi_k|^{p-2}\boldsymbol\varphi_k&\text{in $\Omega$,}\\
    \boldsymbol\varphi_k=0&\text{on $\partial\Omega$}.
\end{cases}
\]
The existence of this sequence and the fact that $\lambda_k\to+\infty$ can be proved by following the scalar construction in \cite[Section 5]{candela2009infinitely}. Indeed, the argument is purely variational and extends to $W_0^{1,p}(\Omega;\R^m)$ once the scalar products in the orthogonality conditions are replaced by the Euclidean product in $\R^m$. Moreover, if $\varphi_k$ is a scalar pseudo-eigenfunction and $\boldsymbol e_i$ is an element of the canonical basis of $\R^m$, then $\varphi_k \boldsymbol e_i=(0,\dots,0,\varphi_k,0,\dots,0)$ is an admissible vectorial pseudo-eigenfunction. This shows that the scalar construction can also be used in the vectorial setting and yields the same finite-codimensional decomposition.

More precisely, as in \cite[Section 5]{candela2009infinitely}, for every $h\ge1$ one can define
\[
W_h:=\left\{\boldsymbol w\in W_0^{1,p}(\Omega;\R^m)\ :\ \int_\Omega|\boldsymbol\varphi_j|^{p-2}\boldsymbol\varphi_j\cdot\boldsymbol w=0\quad \forall\ j=1,\dots,h-1\right\}.
\]
Then $W_h$ has finite codimension. Moreover, for a suitable choice of the pseudo-eigenfunctions $\boldsymbol\varphi_1,\dots,\boldsymbol\varphi_{h-1}$, one has
\begin{equation}\label{decomposition}
W_0^{1,p}(\Omega;\R^m)=\operatorname{span}\left\{\boldsymbol\varphi_1,\dots,\boldsymbol\varphi_{h-1}\right\}\oplus W_{h}.
\end{equation}
Furthermore,
\[
\lambda_{h}\int_\Omega|\boldsymbol w|^p\le\int_\Omega|D\boldsymbol w|^p,
\qquad \text{for every $\boldsymbol w\in W_{h}$}.
\]

 \section{Regularity}\label{regularity section}

In this section, we study the regularity of weak solutions of \eqref{P} when $p\ge2$.\\ We need the following lemma:
\begin{lemma}[{\cite[Lemma 2.1]{Vannella23}}]\label{lemma vannella}
    Let $p\in(1,N)$ and $p^*=\frac{Np}{N-p}$. If $r,\varepsilon>0, u_0\in W_0^{1,p}(\Omega;\R)$ and $q\in[1,p^*)$, there exists $\sigma>0$ such that
    \[\int_{\{|u|\ge\sigma\}}|u|^q<\varepsilon,\]
    for every $u\in B_r(u_0)$.
\end{lemma}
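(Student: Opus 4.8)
The plan is to exploit the fact that $B_r(u_0)$ is a bounded subset of $W_0^{1,p}(\Omega)$, and therefore, via the Sobolev embedding $W_0^{1,p}(\Omega)\hookrightarrow L^{p^*}(\Omega)$ valid for $p\in(1,N)$, a bounded subset of $L^{p^*}(\Omega)$. Concretely, I would first fix a constant $M>0$, depending only on $r$, $u_0$ and the embedding constant but \emph{not} on the individual function, such that $|u|_{p^*}\le M$ for every $u\in B_r(u_0)$. This uniform higher-integrability bound is the whole point: it is what makes the estimate below independent of $u$, which is the only genuine content of the statement (the claim is really a uniform integrability assertion over the ball).

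Next I would use $q<p^*$ to trade integrability for a gain in $\sigma$. On the level set $\{|u|\ge\sigma\}$ one has $|u|^{q-p^*}\le\sigma^{q-p^*}$, since the exponent $q-p^*$ is negative and $t\mapsto t^{q-p^*}$ is decreasing on $(0,\infty)$. Writing $|u|^q=|u|^{p^*}|u|^{q-p^*}$ and integrating therefore gives
\[
\int_{\{|u|\ge\sigma\}}|u|^q\le\sigma^{q-p^*}\int_{\{|u|\ge\sigma\}}|u|^{p^*}\le\sigma^{q-p^*}\,|u|_{p^*}^{p^*}\le\sigma^{q-p^*}M^{p^*}.
\]
An equivalent route, if one prefers not to split the integrand, is H\"older's inequality with exponents $p^*/q$ and $p^*/(p^*-q)$ combined with Chebyshev's inequality $|\{|u|\ge\sigma\}|\le M^{p^*}/\sigma^{p^*}$; both approaches yield a bound that is a negative power of $\sigma$ times a constant depending only on $M$.

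Finally, since $q-p^*<0$, the right-hand side $\sigma^{q-p^*}M^{p^*}$ tends to $0$ as $\sigma\to+\infty$. Hence, given $\varepsilon>0$, it suffices to choose $\sigma$ so large that $\sigma^{q-p^*}M^{p^*}<\varepsilon$; because $M$ does not depend on the particular $u$, the resulting inequality $\int_{\{|u|\ge\sigma\}}|u|^q<\varepsilon$ holds simultaneously for all $u\in B_r(u_0)$, which is exactly the claim.

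I do not foresee a real obstacle: the argument is elementary once the uniform $L^{p^*}$ bound is in place. The only point requiring care is the strictness $q<p^*$, which is essential — for $q=p^*$ the exponent $q-p^*$ vanishes and the bound degenerates to the constant $M^{p^*}$, giving no decay in $\sigma$ and hence no uniform smallness. This is precisely why the hypothesis restricts $q$ to the subcritical range $[1,p^*)$.
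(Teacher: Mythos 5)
Your proof is correct. The paper does not actually prove this lemma --- it is imported verbatim from \cite[Lemma 2.1]{Vannella23} --- so there is no in-text argument to compare against, but the route you take (uniform $L^{p^*}$ bound on the ball via the Sobolev embedding, then $|u|^q\le \sigma^{q-p^*}|u|^{p^*}$ on the level set $\{|u|\ge\sigma\}$, using $q-p^*<0$) is the standard and essentially the only natural one, and every step is valid; in particular you correctly identify that the entire content is the uniformity of the constant $M$ over $B_r(u_0)$ and the strict inequality $q<p^*$.
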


\begin{proof}[Proof of Theorem \ref{regularity cap1}]
We follow the procedure in \cite{Vannella23}, see also \cite{carmona2013regularity} and \cite{guedda1989quasilinear}.
For every $\gamma, t, k>1$, we define
\begin{equation*}
h_{k,\gamma}(s) =
\begin{cases}
|s|^{\gamma-1} s, & |s| \leq k, \\
\gamma k^{\gamma-1} s + \operatorname{sign}(s) (1-\gamma)k^\gamma , & |s| > k.
\end{cases}
\end{equation*}
We also define
\begin{equation*}
\Phi_{k,t,\gamma}(s) = \int_0^s|h_{k,\gamma}'(r)|^{\frac{t}{\gamma}}\, dr.
\end{equation*}
In particular, 
$h_{k,\gamma}'(s)=\gamma|s|^{\gamma-1}$ when $|s|\le k$ and $h_{k,\gamma}'(s)=\gamma k^{\gamma-1}$ otherwise. Hence, $h_{k,\gamma}$ and $\Phi_{k,t,\gamma}$ are $C^1$ functions with bounded derivatives (depending on $\gamma,t$ and $k$). Thus, $\Phi_{k,t,\gamma}(u^i) \in W_0^{1,p}(\Omega)$ whenever $u^i\in W_0^{1,p}(\Omega)$, and it can be used as a scalar component of a test function.
Additionally, for each $t\ge\gamma$, there exists a positive constant $C$, depending on $\gamma$ and $t$ but independent of $k$, such that
\begin{equation}\label{1}
|s|^{\frac{t}{\gamma}-1} |\Phi_{k,t,\gamma}(s)| \leq C |h_{k,\gamma}(s)|^{\frac{t}{\gamma}},
\end{equation}
\begin{equation}\label{2}
|\Phi_{k,t,\gamma}(s)| \leq C |h_{k,\gamma}(s)|^{\frac{1}{\gamma}\left (1 + t \frac{\gamma-1}{\gamma}\right)}.
\end{equation}

We first prove the estimate for the first component; the same argument applies to all other components. Set
\[
\boldsymbol\psi=(\Phi_{k, \gamma p,\gamma}(u^1),0,\dots,0).
\]
Since $\boldsymbol u$ is a weak solution of \eqref{P}, we may use $\boldsymbol\psi$ as a test function. Hence
\begin{equation*}
\langle \mathcal J'(\boldsymbol u), \boldsymbol\psi \rangle =0,
\end{equation*}
for any $k,\gamma > 1$. \\
In view of the fact that $p\ge2$, we can deduce:
\[|\nabla  u^1|^{p-2}\le|D\boldsymbol u|^{p-2}.\]
Additionally, exploiting the embedding  $W^{1,p}_0(\Omega; \mathbb{R}) \hookrightarrow L^q(\Omega; \mathbb{R})$, and from the subcritical growth \eqref{subcritical growth theorem 1.5}, we can find $c,c_1 > 0$ such that
\begin{equation}\label{stimah}
\begin{aligned}
    \left(\int_\Omega |h_{k,\gamma}( u^1)|^q\right)^{\frac{p}{q}}&\le c\int_\Omega |\nabla h_{k,\gamma}( u^1)|^p=c\int_{\Omega}|\nabla  u^1|^p\cdot|h_{k,\gamma}'( u^1)|^p\\
    &=c\int_{\Omega}|\nabla  u^1|^{p-2}\cdot|\nabla  u^1|^2\cdot|h_{k,\gamma}'( u^1)|^p\\
    &\le c\int_\Omega |D{\boldsymbol{u}}|^{p-2}\cdot|\nabla  u^1|^2\cdot|h_{k,\gamma}'( u^1)|^p\\
    &=c\int_\Omega|D{\boldsymbol{u}}|^{p-2}\nabla  u^1\cdot\nabla\Phi_{k,\gamma p,\gamma}( u^1)=c\int_\Omega f^1(x,\boldsymbol u)\Phi_{k,\gamma p,\gamma}( u^1)\\
    &\le c_1\int_\Omega\left(1+|\boldsymbol u|^{q-1}\right)\left|\Phi_{k,\gamma p,\gamma}( u^1)\right|.
\end{aligned}
\end{equation}

\noindent
Denote
\[ \Omega_{\sigma, \boldsymbol u} = \{ x \in \Omega : |\boldsymbol u(x)| > \sigma \}  .\]
From \eqref{1} and \eqref{2}, we obtain
\begin{align*}
    \int_{\Omega}\left(1+|\boldsymbol u|^{q-1}\right)\left|\Phi_{k,\gamma p,\gamma}( u^1)\right|&=\int_{\Omega_{\sigma,\boldsymbol u}^c}\left(1+|\boldsymbol u|^{q-1}\right)\left|\Phi_{k,\gamma p,\gamma}( u^1)\right|+\int_{\Omega_{\sigma,\boldsymbol u}}\left(1+|\boldsymbol u|^{q-1}\right)\left|\Phi_{k,\gamma p,\gamma}( u^1)\right|\\
    &\le(1+\sigma^{q-1})\int_\Omega\left|\Phi_{k,\gamma p,\gamma}( u^1)\right|+\int_{\Omega_{\sigma,\boldsymbol u}}|\boldsymbol u|^{q-p}\cdot|\boldsymbol u|^{p-1}\left|\Phi_{k,\gamma p,\gamma}( u^1)\right|\\
    &\le(1+\sigma^{q-1})\int_\Omega\left|\Phi_{k,\gamma p,\gamma}( u^1)\right|+C\int_{\Omega_{\sigma,\boldsymbol u}}|\boldsymbol u|^{q-p}\left|h_{k,\gamma}( u^1)\right|^p\\
    &\le C(1+\sigma^{q-1})\int_\Omega\left|h_{k,\gamma}( u^1)\right|^{\frac{p\gamma+1-p}{\gamma}}+C\int_{\Omega_{\sigma,\boldsymbol u}}|\boldsymbol u|^{q-p}\left|h_{k,\gamma}( u^1)\right|^p.
\end{align*}
Exploiting H\"older's inequality with exponents $\frac{q}{q-p},\frac qp$, we have:
\begin{align*}
    \int_\Omega\left(1+|\boldsymbol u|^{q-1}\right)\left|\Phi_{k,\gamma p,\gamma}( u^1)\right|&\le C_1(1+\sigma^{q-1})\left\|h_{k,\gamma}( u^1)\right\|_{L^q(\Omega; \R)}^{\frac{\gamma p+1-p}{\gamma}}\\
    &\quad+C_1\|{\boldsymbol{u}}\|_{L^q(\Omega_{\sigma,\boldsymbol u};\R^m)}^{q-p}\cdot\|h_{k,\gamma}( u^1)\|_{L^q(\Omega; \R)}^p.
\end{align*}

Hence, from \eqref{stimah}, we have
\begin{equation}\label{stimah2}
\begin{aligned}
    \|h_{k,\gamma}( u^1)\|_{L^q(\Omega; \R)}^p&\le C_2(1+\sigma^{q-1})\|h_{k,\gamma}( u^1)\|_{L^q(\Omega; \R)}^{\frac{\gamma p+1-p}{\gamma}}\\
    &\quad+C_2\|\boldsymbol u\|_{L^q(\Omega_{\sigma,\boldsymbol u};\R^m)}^{q-p}\left\|h_{k,\gamma}( u^1)\right\|_{L^q(\Omega; \R)}^p.
\end{aligned}
\end{equation}

Taking into account weighted Young's inequality,  we have
\begin{equation*}
 a b^\eta \leq \frac{b}{4} + (4 a)^{\frac{1}{1-\eta}}, \qquad \forall \ \eta\in(0,1), a, b \geq 0,
\end{equation*}
and we get:
\begin{align}\label{youngapplied}
    C_2(1+\sigma^{q-1})\left\|h_{k,\gamma}( u^1)\right\|_{L^q(\Omega; \R)}^{\frac{\gamma p+1-p}{\gamma}}\le\frac14\left\|h_{k,\gamma}( u^1)\right\|_{L^q(\Omega; \R)}^{p}+C_3(1+\sigma^{q-1})^{\frac{\gamma p}{p-1}}.
\end{align}

Thus, \eqref{stimah2} and \eqref{youngapplied} yield
\begin{align*}
    \left\|h_{k,\gamma}( u^1)\right\|_{L^q(\Omega; \R)}^{p}\le C_3(1+\sigma^{q-1})^{\frac{\gamma p}{p-1}}+C_2\|\boldsymbol u\|_{L^q(\Omega_{\sigma,\boldsymbol u};\R^m)}^{q-p}\left\|h_{k,\gamma}( u^1)\right\|_{L^q(\Omega; \R)}^p.
\end{align*}
Since $\boldsymbol u\in L^q(\Omega;\R^m)$, Lemma \ref{lemma vannella} gives
\[
\|\boldsymbol u\|_{L^q(\Omega_{\sigma,\boldsymbol u};\R^m)}^{q-p}\to0,\qquad\text{as $\sigma\to+\infty$.}
\]
We choose $\sigma$ large enough so that
\[
C_2\|\boldsymbol u\|_{L^q(\Omega_{\sigma,\boldsymbol u};\R^m)}^{q-p}\le \frac12.
\]
Absorbing the last term into the left-hand side of the previous estimate, we obtain
\begin{equation*}
\int_\Omega | h_{k,\gamma} ({u}^1) |^{q} \quad  \text{ is bounded (uniformly in } k).
\end{equation*}
Similarly,
\begin{equation*}
\int_\Omega | h_{k,\gamma} ({u}^i) |^{q} \quad  \text{ is bounded (uniformly in } k), \qquad i=2,\dots,m.
\end{equation*}

Letting $k\to+\infty$ and using Fatou's lemma, we get $u^i\in L^{\gamma q}(\Omega)$ for every $i=1,\dots,m$. Since $\gamma>1$ is arbitrary, it follows that
\begin{equation*}
 \boldsymbol u \in L^{\tau} (\Omega; \mathbb{R}^{m}) \quad \text{for any } \tau\in(1,\infty).
\end{equation*}
In particular, by the growth assumption \eqref{subcritical growth theorem 1.5},
\[
f^i(x,\boldsymbol u)\in L^t(\Omega)\qquad \text{for every $t>1$ and $i=1,\dots,m$. }
\]
We now prove boundedness by a Stampacchia truncation argument. Let 
    \[\xi_\sigma(s)=
    \begin{cases}
        s-\sigma&s\ge\sigma\\
        0&|s|\le\sigma\\
        s+\sigma&s\le-\sigma.
    \end{cases}
    \]
    Since $\nabla\xi_\sigma(u^i)=\nabla u^i$ on $A_{\sigma,i}:=\{|u^i|\ge\sigma\}$ and vanishes outside this set, Sobolev's embedding and the inequality $|\nabla u^i|^{p-2}\le |D\boldsymbol u|^{p-2}$ give
    \begin{equation}\label{stima1xi}
    \left(\int_\Omega|\xi_\sigma( u^i)|^{p^*}\right)^{\frac{p}{p^*}}\le C\int_{\Omega}|\nabla\xi_\sigma( u^i)|^p\le C\int_{A_{\sigma,i}}|D\boldsymbol u|^{p-2}|\nabla  u^i|^2.\end{equation}
    Moreover, testing \eqref{P} with $\xi_\sigma(u^i)\boldsymbol e_i$, we deduce that
    \begin{equation}
    \int_{A_{\sigma,i}}|D\boldsymbol u|^{p-2}|\nabla u^i|^2=\int_{A_{\sigma,i}} f^i(x,\boldsymbol u)\xi_\sigma(u^i).
    \end{equation}
    By H\"older's inequality, we have:
    \begin{equation}\label{stima2xi}
    \int_{A_{\sigma,i}}|f^i(x,\boldsymbol u)\xi_\sigma(u^i)|\le\left(\int_{A_{\sigma,i}}|f^i(x,\boldsymbol u)|^{(p^*)'}\right)^{\frac{1}{(p^*)'}}\cdot\left(\int_{A_{\sigma,i}}|\xi_\sigma(u^i)|^{p^*}\right)^{\frac{1}{p^*}},\quad \frac{1}{p^*}+\frac{1}{(p^*)'}=1.
    \end{equation}
    We choose $\theta>N/p$. Since $f^i(x,\boldsymbol u)\in L^\theta(\Omega)$, H\"older's inequality on $A_{\sigma,i}$ and \eqref{stima1xi}-\eqref{stima2xi} yield
    \[
    \left(\int_\Omega|\xi_\sigma( u^i)|^{p^*}\right)^{\frac{p-1}{p^*}}\le C\left(\int_{A_{\sigma,i}}|f^i(x,\boldsymbol u)|^{\theta}\right)^{\frac{1}{\theta}}|A_{\sigma,i}|^{\frac{1}{(p^*)'}-\frac1\theta},\quad \theta>\frac{N}{p}.
    \]
    On the other hand, for every $M>\sigma$, we have that
    \[
    \left(\int_\Omega|\xi_\sigma(u^i)|^{p^*} \right)^{\frac{p-1}{p^*}}\ge  \left(\int_{A_{M,i}}|\xi_\sigma(u^i)|^{p^*} \right)^{\frac{p-1}{p^*}}\ge(M-\sigma)^{p-1}|A_{M,i}|^{\frac{p-1}{p^*}}.
    \]
    Then, there exists a positive constant $\tilde C>0$ such that
    \[
    |A_{M,i}|\le\frac{\tilde C}{(M-\sigma)^{p^*}}|A_{\sigma,i}|^\gamma,\qquad \gamma=\frac{p^*}{p-1}\left(\frac{1}{(p^*)'}-\frac1\theta\right)>1.
    \]
    
 Since $\gamma>1$, Stampacchia's Lemma \cite[Lemma 4.1]{stampacchia1965probleme} applies to the map $\sigma\mapsto |A_{\sigma,i}|$. Therefore, there exists $K>0$ such that $|A_{\sigma,i}|=0$ for every $\sigma\ge K$. This proves that $u^i\in L^\infty(\Omega)$ for every $i=1,\dots,m$, and hence $\boldsymbol u\in L^\infty(\Omega;\R^m)$.
   
The last part of the statement is then a consequence of the regularity theory up to the boundary, see \cite{benedetto1989boundary}.
\end{proof}

\section{Existence results}

\subsection{The \texorpdfstring{$p$}{p}-sublinear case}

\begin{proof}[Proof of Theorem \ref{thm 1}]
    From \eqref{g.1}, we can deduce 
     \begin{align*}
    \mathcal J(\boldsymbol u)&\ge\frac{1}{p}\int_\Omega |D\boldsymbol{u}|^p-\int_\Omega a(x)|\boldsymbol{u}|-b\int_\Omega|\boldsymbol{u}|^q.
    \end{align*}
    Moreover, by H\"older and Sobolev's inequalities, we have 
    \begin{align*}
     \mathcal J(\boldsymbol u)&\ge\frac{1}{p}\int_\Omega |D\boldsymbol{u}|^p-|a|_{\frac{p^*}{p^*-1}}\cdot|\boldsymbol u|_{p^*}-b\int_\Omega|\boldsymbol{u}|^q\\
     &\ge\frac1p\|\boldsymbol{u}\|^p-C_1\|\boldsymbol{u}\|-C_2\|\boldsymbol{u}\|^q\to+\infty,\ \ \text{as $\|\boldsymbol{u}\|\to+\infty$}.
    \end{align*}
    Hence, $\mathcal J$ is coercive.\\
    Now, let $\{\boldsymbol{u}_n\}\subset W_0^{1,p}(\Omega; \R^m)$ be a minimizing sequence:
    \[\mathcal J(\boldsymbol{u}_n)=\inf_{\boldsymbol{u}\in W_0^{1,p}(\Omega; \R^m)}\mathcal J(\boldsymbol{u})+o(1).\]
     Since $\mathcal J$ is coercive, $\{\boldsymbol{u}_n\}$ is bounded in $W_0^{1,p}(\Omega; \R^m)$. Up to a subsequence, we have 
    $\boldsymbol{u}_n\wto\boldsymbol{u}$ in $W_0^{1,p}(\Omega; \R^m)$.
    
   Therefore, by exploiting Lebesgue's Theorem and possibly passing to a further subsequence, we obtain
    \[\lim_{n\to+\infty}\int_\Omega F(x,\boldsymbol{u}_n)=\int_\Omega F(x,\boldsymbol{u}),\qquad \mathcal J(\boldsymbol{u})\le\liminf_{n\to+\infty}\mathcal J(\boldsymbol{u}_n)=\inf_{\boldsymbol{v}\in W_0^{1,p}(\Omega; \R^m)}\mathcal J(\boldsymbol{v}).\]
    Hence, $\boldsymbol{u}$ is a minimizer.
    
    Now, let $\{\lambda_h,\ h\ge1\}$ be the eigenvalues of the vectorial $p$-Laplacian (defined in \eqref{lambdak}) and let $\{\boldsymbol{\varphi}_h,\ h\ge1\}$ be the vectorial eigenfunctions.
    From \eqref{g.3}, we have there exist  $\beta>\lambda_{1}$ and $\delta>0$ such that 
    \[F(x,\boldsymbol{s})\ge\frac{\beta}{p}|\boldsymbol{s}|^p,\ \ |\boldsymbol{s}|\le\delta.\]
     Let $\varepsilon>0$ small enough and let $\boldsymbol{\varphi}_{1}$ be the  vectorial eigenfunction associated to $\lambda_{1}$. Hence,
    \begin{align*}
        \mathcal J(\varepsilon\boldsymbol{\varphi}_1)&\le\frac{\varepsilon^p}{p}\int_\Omega |D\boldsymbol{\varphi}_1|^p-\frac{\beta\varepsilon^p}{p}\int_\Omega|\boldsymbol{\varphi}_1|^p\\
        &=\frac{(\lambda_1-\beta)\varepsilon^p}{p}\int_\Omega|\boldsymbol{\varphi}_1|^p<0.
    \end{align*}
    Thus, $\mathcal J(\boldsymbol{u})\le\mathcal J(\varepsilon\boldsymbol{\varphi}_1)<0$, and the minimizer is not trivial.
\end{proof}
\begin{proposition}\label{uniqueness cap1}
  Let $1<q<p$ and assume that \eqref{g.1} holds.
Suppose  that the functional 
    \[
        W_0^{1,p}(\Omega;\mathbb{R}^m) \ni \boldsymbol{u} \mapsto -\int_\Omega F(x, \boldsymbol{u})
    \]
    is convex. Then, there exists exactly one weak solution of \eqref{P}.
\end{proposition}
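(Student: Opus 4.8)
The plan is to write $\mathcal J = J_1 + J_2$ with $J_1(\boldsymbol u)=\frac1p\int_\Omega|D\boldsymbol u|^p$ and $J_2(\boldsymbol u)=-\int_\Omega F(x,\boldsymbol u)$, prove that $J_1$ is \emph{strictly} convex while $J_2$ is convex by hypothesis (so that $\mathcal J$ is strictly convex), and then obtain existence by the direct method and uniqueness as an immediate consequence of strict convexity.

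First I would establish the pointwise strict convexity of the integrand $\xi\mapsto|\xi|^p$ on the space of $m\times N$ matrices for $p>1$, combining the strict convexity of the Euclidean norm with that of $t\mapsto t^p$ on $[0,\infty)$. Given distinct $\xi_1,\xi_2$, one splits into two cases: if they are not positively proportional, the strict triangle inequality gives $|\tfrac{\xi_1+\xi_2}{2}|<\tfrac{|\xi_1|+|\xi_2|}{2}$ and the monotonicity of $t\mapsto t^p$ closes the estimate; if instead they are positively proportional, then $|\xi_1|\neq|\xi_2|$ and the strict convexity of $t\mapsto t^p$ applies directly. Integrating yields convexity of $J_1$; to upgrade to strict convexity I would observe that if $\boldsymbol u_1\neq\boldsymbol u_2$ in $W_0^{1,p}(\Omega;\R^m)$ then $D\boldsymbol u_1\neq D\boldsymbol u_2$ on a set of positive measure (otherwise $\boldsymbol u_1-\boldsymbol u_2$ would be constant and, vanishing on $\partial\Omega$, identically zero), so the pointwise strict inequality survives integration. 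Since $J_2$ is convex by assumption, $\mathcal J=J_1+J_2$ is strictly convex.

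For existence I would invoke the direct method on the reflexive space $W_0^{1,p}(\Omega;\R^m)$. For coercivity, since $J_2$ is convex and Fréchet differentiable (Proposition \ref{frechet diff}) and $F(x,\boldsymbol 0)=0$ by \eqref{g.1}, the subgradient inequality at the origin gives $J_2(\boldsymbol u)\ge\langle J_2'(\boldsymbol 0),\boldsymbol u\rangle=-\int_\Omega\boldsymbol f(x,\boldsymbol 0)\cdot\boldsymbol u$, a continuous linear (hence affine) minorant bounded by $C\|\boldsymbol u\|$; combined with the equivalence of $(\int_\Omega|D\boldsymbol u|^p)^{1/p}$ with $\|\boldsymbol u\|$ this yields $\mathcal J(\boldsymbol u)\ge \tfrac1p C\|\boldsymbol u\|^p-C'\|\boldsymbol u\|\to+\infty$. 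Weak lower semicontinuity follows because $\mathcal J$ is convex and strongly continuous, hence weakly sequentially lower semicontinuous. A minimizing sequence then admits a weakly convergent subsequence whose limit is a global minimizer, which by Proposition \ref{frechet diff} and the subsequent remark is a weak solution of \eqref{P}.

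Finally, uniqueness is immediate from strict convexity: a critical point of a convex functional is a global minimizer, and a strictly convex functional has at most one minimizer; equivalently, strict convexity of the $C^1$ functional $\mathcal J$ makes $\mathcal J'$ strictly monotone, so $\mathcal J'(\boldsymbol u_1)=\mathcal J'(\boldsymbol u_2)=0$ forces $\boldsymbol u_1=\boldsymbol u_2$. I expect the main obstacle to be the strict convexity of $J_1$, and specifically the pointwise strict convexity of $|\xi|^p$ on matrices: this must be handled through the two cases above, since $t\mapsto t^p$ fails to be strictly convex at the origin when $1<p<2$ and the naive composition of a convex norm with a convex power only delivers convexity, not strictness.
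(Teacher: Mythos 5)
Your argument is correct and follows essentially the same route as the paper — decompose $\mathcal J$ into the strictly convex $p$-Dirichlet term and the convex term $-\int_\Omega F$, then deduce uniqueness from strict convexity — except that the paper merely asserts the strict convexity of the gradient term and says nothing about existence, both of which you supply in full (your coercivity via the affine minorant $J_2(\boldsymbol u)\ge\langle J_2'(\boldsymbol 0),\boldsymbol u\rangle$ is a nice touch, since it avoids any restriction on $q$). One harmless slip: $t\mapsto t^p$ \emph{is} strictly convex at the origin for every $p>1$ (what degenerates for $1<p<2$ is strong, not strict, convexity), but your two-case analysis does not actually use this misstatement — the genuine reason the case split is needed is only that the matrix norm fails to be strictly convex along rays.
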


\begin{proof}
Reasoning as in the proof of Theorem \ref{thm 1}, one can show the existence of a minimizer for $\mathcal J$.

    Since the map \( \boldsymbol{u} \mapsto \|\boldsymbol{u}\|^p \) is strictly convex on \( W_0^{1,p}(\Omega; \mathbb{R}^m) \) for \( p > 1 \), it follows that \( \mathcal{J} \) is the sum of a strictly convex functional and a convex functional. Hence, \( \mathcal{J} \) is strictly convex. Therefore, the minimizer is unique, and the proof is complete.
\end{proof}

\begin{corollary}
    The problem
    \[
        \begin{cases}
            -\boldsymbol{\Delta}_p \boldsymbol{u} = \boldsymbol{f} & \text{in } \Omega, \\
            \boldsymbol{u} = 0 & \text{on } \partial\Omega
        \end{cases}
    \]
    has exactly one weak solution for every \( \boldsymbol{f} \in W^{-1,p'}(\Omega; \mathbb{R}^m) \).
\end{corollary}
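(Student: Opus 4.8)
The plan is to recognize this as the special case of Proposition \ref{uniqueness cap1} in which the nonlinear term is replaced by a fixed continuous linear functional. For $\boldsymbol{f}\in W^{-1,p'}(\Omega;\R^m)$, the natural energy is
\[
\mathcal J(\boldsymbol u)=\frac1p\|\boldsymbol u\|^p-\langle\boldsymbol f,\boldsymbol u\rangle,
\]
whose critical points are exactly the weak solutions of the problem, since for any $\boldsymbol\varphi\in W_0^{1,p}(\Omega;\R^m)$ one has $\langle\mathcal J'(\boldsymbol u),\boldsymbol\varphi\rangle=\int_\Omega|D\boldsymbol u|^{p-2}D\boldsymbol u\cdot D\boldsymbol\varphi-\langle\boldsymbol f,\boldsymbol\varphi\rangle$. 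The key observation is that $\boldsymbol u\mapsto-\langle\boldsymbol f,\boldsymbol u\rangle$ is affine, hence convex, so the hypothesis of Proposition \ref{uniqueness cap1} is automatically satisfied.

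First I would establish existence by the direct method, mirroring the proof of Theorem \ref{thm 1}. Coercivity follows from the duality estimate $|\langle\boldsymbol f,\boldsymbol u\rangle|\le\|\boldsymbol f\|_{W^{-1,p'}}\|\boldsymbol u\|$, which gives $\mathcal J(\boldsymbol u)\ge\frac1p\|\boldsymbol u\|^p-\|\boldsymbol f\|_{W^{-1,p'}}\|\boldsymbol u\|\to+\infty$ as $\|\boldsymbol u\|\to+\infty$. A minimizing sequence is therefore bounded and admits a weakly convergent subsequence $\boldsymbol u_n\wto\boldsymbol u$. Weak lower semicontinuity of $\mathcal J$ is immediate: the term $\frac1p\|\cdot\|^p$ is convex and continuous, hence weakly lower semicontinuous, while $\langle\boldsymbol f,\cdot\rangle$ is weakly continuous by the very meaning of weak convergence. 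This produces a minimizer $\boldsymbol u$, which is a weak solution by the computation of $\mathcal J'$ above (the Fréchet differentiability of the $p$-power term is exactly Step 1 of Proposition \ref{frechet diff}, and the linear term is trivially differentiable with derivative $-\boldsymbol f$).

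Uniqueness then follows verbatim from Proposition \ref{uniqueness cap1}: the map $\boldsymbol u\mapsto\|\boldsymbol u\|^p$ is strictly convex for $p>1$, and adding the convex (indeed affine) term $-\langle\boldsymbol f,\boldsymbol u\rangle$ preserves strict convexity. A strictly convex functional has at most one minimizer, and since $\mathcal J$ is convex every critical point is a global minimizer; hence there is exactly one weak solution.

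I do not expect a genuine obstacle here. The only point not literally covered by Proposition \ref{uniqueness cap1} is that $\boldsymbol f$ is a general element of $W^{-1,p'}(\Omega;\R^m)$ rather than an $L^r$ datum arising from a potential $F$ satisfying \eqref{g.1}; but this is in fact a simplification, since the pairing $\langle\boldsymbol f,\cdot\rangle$ is continuous and convex directly from the definition of the dual norm, so every step of the argument goes through without invoking the growth hypothesis.
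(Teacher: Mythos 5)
Your proposal is correct and follows essentially the same route as the paper: the paper likewise writes the energy as $\tfrac1p\|\boldsymbol u\|^p-\langle\boldsymbol f,\boldsymbol u\rangle$, observes that the second term is linear hence convex, and concludes by Proposition~\ref{uniqueness cap1}. Your additional remarks (the explicit coercivity and weak lower semicontinuity for existence, and the observation that a general $\boldsymbol f\in W^{-1,p'}$ need not arise from a potential satisfying \eqref{g.1}) merely fill in details the paper leaves implicit.
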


\begin{proof}
    The associated energy functional $ \mathcal{J} $ is given by
    \[
        \mathcal{J}(\boldsymbol{u}) = \frac{1}{p} \|\boldsymbol{u}\|^p - \langle \boldsymbol{f}, \boldsymbol{u} \rangle.
    \]
    The second term is linear and, in particular, convex. The thesis follows from Proposition~\ref{uniqueness cap1}.
\end{proof}
\begin{remark}
   We note that $ \boldsymbol{f} \in W^{-1,p'}(\Omega; \mathbb{R}^m) $ if, for instance, $ \boldsymbol{f} \in L^r(\Omega; \R^m) $ with
\[
    r \ge (p^*)' = \frac{Np}{Np - N + p}.
\]

\end{remark}
\subsection{The \texorpdfstring{$p$}{p}-superlinear case}
\begin{proof}[Proof of Theorem \ref{thm 2}]
   We divide the proof into three steps:\\
  \textbf{Step 1}: $\mathcal J\ge\alpha$ in $\partial B_\rho\cap W$ for some $\alpha,\rho>0$ and $W\subset W_0^{1,p}(\Omega; \R^m)$ with finite codimension.
  
   We consider $W_{h_0}$ defined in \eqref{decomposition},
  and we will choose $h_0\in\N$ later.
   Let $\rho>0$ and $\boldsymbol{u}\in\partial B_\rho\cap W_{h_0}$. By \eqref{g.1}, H\"older's inequality, Sobolev's inequality and the definition of the eigenvalues $\lambda_j$ (see \eqref{lambdak}), we obtain:
    \begin{align*}
        \mathcal J(\boldsymbol{u})&\ge\frac{1}{p}\|\boldsymbol{u}\|^p-C_1\int_\Omega|\boldsymbol{u}|^q-C_2\|\boldsymbol{u}\|\\
        &\ge\frac{1}{p}\|\boldsymbol{u}\|^p-C_1\left(\int_\Omega|\boldsymbol{u}|^p\right)^{\frac{t}{p}}\cdot\left(\int_\Omega|\boldsymbol{u}|^{p^*}\right)^{\frac{q-t}{p^*}}-C_2\|\boldsymbol{u}\|\\
        &\ge\frac{1}{p}\|\boldsymbol{u}\|^p-C_1\lambda_{h_0}^{-\frac{t}{p}}\|\boldsymbol{u}\|^{q}-C_2\|\boldsymbol{u}\|\\
        &=\|\boldsymbol{u}\|\left[\left(\frac1p-C_1\lambda_{h_0}^{-\frac{t}{p}}\|\boldsymbol{u}\|^{q-p}\right)\|\boldsymbol{u}\|^{p-1}-C_2\right]\ge\rho
    \end{align*}
   with $\rho:=[2p\left(1+C_2\right)]^{\frac{1}{p-1}}$, $\frac{t}{p}+\frac{q-t}{p^*}=1$ and $h_0$ large enough such that
     \[
  \frac1p-C_1\lambda_{h_0}^{-\frac{t}{p}}\rho^{q-p}\ge\frac{1}{2p}.
  \]
   Thus,   
     $\mathcal J(\boldsymbol{u})\ge\rho$ in $\partial B_\rho\cap W_{h_0}$.\\
 \textbf{Step 2}:   For every subspace $V\subset W_0^{1,p}(\Omega; \R^m)$ of finite dimension there exists $R>\rho$ such that $\mathcal J\le 0$ in $B_R^c\cap V$.
 
  Let $\boldsymbol\varphi\in V$ with $\|\boldsymbol\varphi\|=1$. By \eqref{g.2},  we have that 
    \[F(x,\boldsymbol s)\ge b_0(x)|\boldsymbol s|^{\mu}-a_0(x),\ \ \text{with $a_0,b_0\in L^1(\Omega)$, $b_0(x)>0$ a.e. in $\Omega$}.\]
   Therefore,
    \begin{align*}
        \mathcal J(t\boldsymbol\varphi)\le \frac{Ct^{p}}{p}-t^{\mu}\int_\Omega b_0(x)|\boldsymbol\varphi|^{\mu}+\int_\Omega|a_0(x)|\to-\infty,\qquad \text{ as $t\to+\infty$.}
    \end{align*}
   Thus, the claim follows by taking $R>0$ large enough. \\
%
    \textbf{Step 3}: $\mathcal J$ satisfies the $(PS)_c$-condition.
    
      Let $\{\boldsymbol{u}_n\}$ be a Palais-Smale sequence at level $c\in\R$, then $\{\mathcal J(\boldsymbol u_n)\}$ is bounded in $\R$ and 
    \[\int_\Omega|D\boldsymbol{u}_n|^{p-2}D\boldsymbol{u}_n\cdot D\boldsymbol{\varphi}-\int_\Omega \boldsymbol{f}(x,\boldsymbol{u}_n)\cdot\boldsymbol{\varphi}={o(1)}.\]
   Hence, there exists $C>0$ such that
    \begin{align*}
        C+1+\|\boldsymbol{u}_n\|&\ge\mathcal J(\boldsymbol{u}_n)-\frac{1}{\mu}\langle\mathcal J'(\boldsymbol{u}_n),\boldsymbol{u}_n\rangle\\
        &=\left(\frac1p-\frac{1}{\mu}\right)\int_\Omega |D\boldsymbol{u}_n|^p-\int_\Omega\left[F(x,\boldsymbol{u}_n)-\frac{1}{\mu}\boldsymbol{f}(x,\boldsymbol{u}_n)\cdot\boldsymbol{u}_n\right].
    \end{align*}
    According to \eqref{g.2}, we have that there exists $C_1>0$ such that
    \[
    C+1+\|\boldsymbol u_n\|\ge\left(\frac1p-\frac{1}{\mu}\right)\|\boldsymbol{u}_n\|^p-C_1.
    \]  
  Since $\mu>p$, we have that $\{\boldsymbol{u}_n\}$ is bounded in $W_0^{1,p}(\Omega; \R^m)$. Then, there exists a subsequence weakly convergent at some $\boldsymbol u\in W_0^{1,p}(\Omega; \R^m)$ in $W_0^{1,p}(\Omega; \R^m)$ and strongly convergent in $L^q(\Omega; \R^m)$ for every $q\in[1,p^*)$. In particular:
    \[\lim_{n\to+\infty}\int_\Omega f^j(x,\boldsymbol u_n)u_n^j=\int_{\Omega}f^j(x,\boldsymbol u)u^j,\ \ \forall\ j=1,\dots,m,\]
    which implies that 
    \[\lim_{n\to+\infty}\int_\Omega [\boldsymbol f(x,\boldsymbol u_n)-\boldsymbol f(x,\boldsymbol u)]\cdot(\boldsymbol u_n-\boldsymbol u)=0.\]
    \noindent
    Since \(\boldsymbol u_n-\boldsymbol u\rightharpoonup0\) in \(W^{1,p}_0(\Omega;\mathbb R^m)\) and \(\mathcal J'(\boldsymbol u)\in W^{-1,p'}(\Omega;\mathbb R^m)\), we have
\[
\langle \mathcal J'(\boldsymbol u),\boldsymbol u_n-\boldsymbol u\rangle=o(1).
\]
Thus,
\begin{equation}\label{PS condition eq1}
  \begin{aligned}
       o(1)&=\langle\mathcal J'(\boldsymbol{u}_n)-\mathcal J'(\boldsymbol{u}),\boldsymbol{u}_n-\boldsymbol{u}\rangle=\int_\Omega |D\boldsymbol{u}_n|^{p-2}D\boldsymbol{u}_n\cdot D(\boldsymbol{u}_n-\boldsymbol{u})\\
    &\quad-\int_\Omega |D\boldsymbol{u}|^{p-2} D\boldsymbol{u}\cdot D(\boldsymbol{u}_n-\boldsymbol{u})+o(1)\\
    &=\int_\Omega\left(|D\boldsymbol{u}_n|^{p-2} D\boldsymbol{u}_n-|D\boldsymbol{u}|^{p-2} D\boldsymbol{u}\right)\cdot D(\boldsymbol{u}_n-\boldsymbol{u})+o(1).
  \end{aligned}  
\end{equation}
We recall the following vectorial inequalities for the vector field $\mathcal X:\R^k\to\R^k$, $\mathcal X(x)=|x|^{p-2}x$.
    \begin{align}\label{vectorial inequality1}
      \left(\mathcal X(x_1)-\mathcal X(x_2)\right)\cdot(x_1-x_2)&\ge 2^{2-p}|x_1-x_2|^p,\ p\ge2,\\
      \label{vector inequality2}
     \left(\mathcal X(x_1)-\mathcal X(x_2)\right)\cdot(x_1-x_2)&\ge (p-1)|x_1-x_2|^2\left(1+|x_1|^2+|x_2|^2\right)^{\frac{p-2}{2}},\ 1<p\le2,
    \end{align}
    see for instance \cite[Section 12, inequalities (I), (VII)]{lindqvist2017notes}.

We consider two cases and we use \eqref{vectorial inequality1}-\eqref{vector inequality2}:\\
\textbf{Case 1}:  $p\ge2$. From \eqref{PS condition eq1} we obtain:
\begin{equation}\label{PS condition eq2}
 o(1)\ge 2^{2-p}\int_\Omega |D\boldsymbol{u_n}-D\boldsymbol u|^p,   
\end{equation}
and $\boldsymbol{u}_n\to\boldsymbol{u}$ in $W_0^{1,p}(\Omega; \R^m)$.\\
\textbf{Case 2}:  $1<p<2$. By \eqref{PS condition eq1} we get
\begin{equation*}
    o(1)\ge(p-1)\int_\Omega|D\boldsymbol{u}_n-D\boldsymbol{u}|^2\left(1+|D\boldsymbol{u}_n|^2+|D\boldsymbol{u}|^2\right)^{\frac{p-2}{2}}.
\end{equation*}

 Now, according to  H\"older's inequality with exponents $\frac{2}{p}$ and $\frac{2}{2-p}$, we obtain:
\begin{equation}\label{PS condition eq3}
    \begin{aligned}
         \int_\Omega |D\boldsymbol{u}_n-D\boldsymbol{u}|^p&=\int_\Omega   |D\boldsymbol{u}_n-D\boldsymbol{u}|^p\cdot\left(\frac{1+|D\boldsymbol{u}_n|^2+|D\boldsymbol{u}|^2}{1+|D\boldsymbol{u}_n|^2+|D\boldsymbol{u}|^2}\right)^{\frac{p(p-2)}{4}} \\
   &\le\left(\int_\Omega |D\boldsymbol{u}_n-D\boldsymbol{u}|^2\left(1+|D\boldsymbol{u}_n|^2+|D\boldsymbol{u}|^2\right)^{\frac{p-2}{2}}\right)^{\frac{p}{2}}\times\\
   &\quad\times\left(\int_\Omega\left(1+|D\boldsymbol{u}_n|^2+|D\boldsymbol{u}|^2\right)^{-\frac{p}{2}}\right)^{\frac{2-p}{2}}=o(1),
    \end{aligned}
\end{equation}
 which implies again that $\boldsymbol{u}_n\to\boldsymbol{u}$ in $W_0^{1,p}(\Omega; \R^m)$.
 
 Finally, we can conclude the proof from Theorem \ref{MP}. 

\end{proof}
Now, consider the $p$-linear perturbation: \[\boldsymbol{g}_\lambda(x,\boldsymbol{s})=\lambda|\boldsymbol{s}|^{p-2}\boldsymbol{s}+\boldsymbol{f}(x,\boldsymbol{s}),\ \ G_\lambda(x,\boldsymbol{s})=\frac{\lambda}{p}|\boldsymbol{s}|^p+F(x,\boldsymbol{s}),\]
with $\lambda\in\R$ and the respective energy functional:
\[\mathcal J_\lambda(\boldsymbol{u})=\frac{1}{p}\int_\Omega|D\boldsymbol{u}|^p-\int_\Omega G_\lambda(x,\boldsymbol{u}).\]
If $\lambda\le0$, the existence of multiple critical points is a consequence of Theorem \ref{thm 2}. 
Indeed, the quantity
\[
\|\boldsymbol u\|_\lambda:=\left(\int_\Omega|D\boldsymbol u|^p-\lambda\int_\Omega|\boldsymbol u|^p\right)^{\frac1p}
\]
is an equivalent norm for $W_0^{1,p}(\Omega; \R^m)$ and one can follow the proof of Theorem \ref{thm 2}.
Otherwise, we exploit the decomposition \eqref{decomposition}.

\begin{proof}[Proof of Theorem \ref{thm lambda}]
   We consider the case $\lambda>0$. We divide the proof into three steps:\\
    \textbf{Step 1}: 
    $\mathcal J_\lambda\ge\alpha$ in $\partial B_\rho\cap W$ for some $\alpha,\rho>0$ and $W\subset W_0^{1,p}(\Omega; \R^m)$ with finite codimension.
    
    Let $W_{h_0}$ be the subspace defined in \eqref{decomposition} and $h_0$ such that $\lambda<\lambda_{h_0}$.   For every $\boldsymbol{u}\in W_{h_0}\cap\partial B_\rho$, as in the proof of Theorem \ref{thm 2}-Step 1, we have:
    \begin{align*}
        \mathcal J_\lambda(\boldsymbol{u})&\ge\frac1p\int_\Omega |D\boldsymbol{u}|^p-\frac{\lambda}{p}\int_\Omega |\boldsymbol{u}|^p-C_1\|\boldsymbol{u}\|-C_2\|\boldsymbol{u}\|^q\\
        &\ge\frac{\lambda_{h_0}-\lambda}{p\lambda_{h_0}}\|\boldsymbol{u}\|^p-C_2\left(\int_\Omega|\boldsymbol{u}|^p\right)^{\frac{t}{p}}\cdot\left(\int_\Omega|\boldsymbol{u}|^{p^*}\right)^{\frac{q-t}{p^*}}-C_1\|\boldsymbol{u}\|\\
        &\ge\frac{\lambda_{h_0}-\lambda}{p\lambda_{h_0}}\|\boldsymbol{u}\|^p-C_2\lambda_{h_0}^{-\frac{t}{p}}\|\boldsymbol{u}\|^{q}-C_1\|\boldsymbol{u}\|\\
        &=\|\boldsymbol{u}\|\left[\left(\frac{\lambda_{h_0}-\lambda}{p\lambda_{h_0}}-C_2\lambda_{h_0}^{-\frac{t}{p}}\|\boldsymbol{u}\|^{q-p}\right)\|\boldsymbol{u}\|^{p-1}-C_1\right]\ge\rho
    \end{align*}
  with $\rho:=[2p\left(1+C_1\right)]^{\frac{1}{p-1}}$,  $\frac{t}{p}+\frac{q-t}{p^*}=1$ and $h_0$ large enough such that
  \[
  \frac{\lambda_{h_0}-\lambda}{p\lambda_{h_0}}-C_2\lambda_{h_0}^{-\frac{t}{p}}\rho^{q-p}\ge\frac{1}{2p}.
  \]
  
 Thus,  $\mathcal J_\lambda(\boldsymbol{u})\ge\rho$ in $\partial B_\rho\cap W_{h_0}$.\\
    \textbf{Step 2}: We can prove as in the proof of Theorem \ref{thm 2} that $\mathcal J_\lambda\le0$ in $B_R^c\cap V$ for every subspace $V\subset W_0^{1,p}(\Omega; \R^m)$ with finite dimension.\\
    \textbf{Step 3}: $\mathcal J_\lambda$ satisfies the $(PS)_c$-condition.
    
     Let $\{\boldsymbol{u}_n\}$ be a $(PS)_c$-sequence, then $\{\mathcal J_\lambda(\boldsymbol{u}_n)\}$ is bounded in $\R$ and $\langle\mathcal J_\lambda'(\boldsymbol{u}_n),\boldsymbol{\varphi}\rangle=o(1)$ for every $\boldsymbol{\varphi}\in W_0^{1,p}(\Omega; \R^m)$. Thus, there exists $C>0$ such that:
    \begin{align*}
                C+1+\|\boldsymbol{u}_n\|&\ge\mathcal J_\lambda(\boldsymbol{u}_n)-\frac{1}{\mu}\langle\mathcal J_\lambda'(\boldsymbol{u}_n),\boldsymbol{u}_n\rangle\\
        &=\left(\frac1p-\frac{1}{\mu}\right)\int_\Omega \left(|D\boldsymbol{u}_n|^p-\lambda|\boldsymbol{u}_n|^p\right)-\int_\Omega\left[F(x,\boldsymbol{u}_n)-\frac{1}{\mu}\boldsymbol{f}(x,\boldsymbol{u}_n)\cdot\boldsymbol{u}_n\right].
    \end{align*}
    Furthermore, from \eqref{g.2}, there exists a constant $C_R>0$ such that
    \[
    C+1+\|\boldsymbol u_n\|\ge\left(\frac1p-\frac{1}{\mu}\right)\int_\Omega \left(|D\boldsymbol{u}_n|^p-\lambda|\boldsymbol{u}_n|^p\right)-C_R.
    \]
Now, assume by contradiction that $\|\boldsymbol u_n\|\to+\infty$. Since
\[
C+1+\|\boldsymbol u_n\|+C_R\le \frac12\left(\frac1p-\frac1\mu\right)\|\boldsymbol u_n\|^p,
\]
as $n\to+\infty$,
 we can deduce that
\begin{equation}\label{stima dal basso norma p}
\frac12\left(\frac1p-\frac1\mu\right)\|\boldsymbol u_n\|^p\le\lambda\left(\frac1p-\frac1\mu\right)\int_\Omega|\boldsymbol u_n|^p.
\end{equation}
We define $\boldsymbol w_n:=\frac{\boldsymbol u_n}{\|\boldsymbol u_n\|}$. Since $\{\boldsymbol w_n\}$ is bounded in $W_0^{1,p}(\Omega; \R^m)$ and $\mu<p^*$, we can assume (up to a subsequence) that $\boldsymbol w_n\to \boldsymbol w$ in $L^p(\Omega; \R^m)$ and in $L^\mu(\Omega; \R^m)$. Moreover, by \eqref{stima dal basso norma p}, we obtain
\[
\int_\Omega|\boldsymbol w|^p=\lim_{n\to+\infty}\int_\Omega|\boldsymbol w_n|^p\ge\frac{1}{2\lambda}>0,\qquad\text{and $\boldsymbol w\not\equiv0$.}
\]
According to \eqref{g.2}, there exist $a_0,b_0\in L^1(\Omega)$, with $b_0(x)>0$, such that
\[
\int_\Omega F(x,\boldsymbol u_n)\ge \int_\Omega b_0(x)|\boldsymbol u_n|^\mu-\int_\Omega a_0(x)=\|\boldsymbol u_n\|^\mu\int_\Omega b_0(x)|\boldsymbol w_n|^\mu-\int_\Omega a_0(x).
\]
Since $p<\mu<p^*$, we can conclude that $\|\boldsymbol u_n\|^{-p}\displaystyle\int_\Omega F(x,\boldsymbol u_n)\to+\infty$ as $n\to+\infty$.
Thus,
\[
\frac{\mathcal J_\lambda(\boldsymbol u_n)}{\|\boldsymbol u_n\|^p}=\frac1p-\frac{\lambda}{p}\int_\Omega|\boldsymbol w_n|^p-\int_\Omega\frac{F(x,\boldsymbol u_n)}{\|\boldsymbol u_n\|^p}\to-\infty \qquad \text{as $n\to+\infty$,}
\]
and this is absurd. Hence, $\{\boldsymbol u_n\}$ is bounded in $W_0^{1,p}(\Omega; \R^m)$.

Up to a subsequence, $\boldsymbol u_n\to\boldsymbol u$ in $L^q(\Omega; \R^m)$ for every $q\in[1,p^*)$, and  \eqref{PS condition eq1} holds also in this case. 
    
    Reasoning as in the proof of Theorem \ref{thm 2}-Step 3, we obtain \eqref{PS condition eq2}-\eqref{PS condition eq3} and the strong convergence in $W_0^{1,p}(\Omega; \R^m)$ follows. 
      Finally, Theorem \ref{MP} concludes the proof.
\end{proof}

\section{Classification result}
In this section, we consider system \eqref{lane emeden system}:
\[
-\boldsymbol {\Delta}_p\boldsymbol u=\lambda|\boldsymbol u|^{p-2}\boldsymbol u+|\boldsymbol u|^{q-2}\boldsymbol u,\qquad \boldsymbol u\in W_0^{1,p}(\Omega; \R^m),
\]
 where $p,q>1$ and
$q<p^*$, and we investigate the existence of least energy solutions to
\eqref{lane emeden system}.

Let $Q_{p,q}:W_0^{1,p}(\Omega;\mathbb{R}^m)\setminus\{\boldsymbol 0\}\to\mathbb{R}$ be the Rayleigh quotient
defined by
\[
Q_{p,q}(\boldsymbol{u})
:=\frac{\displaystyle\int_\Omega |D\boldsymbol{u}|^p
-\lambda\int_\Omega|\boldsymbol{u}|^p}
{\displaystyle\left(\int_\Omega|\boldsymbol{u}|^q\right)^{\frac{p}{q}}},
\]
and define
\[
l_{p,q}
:=\inf_{\boldsymbol{u}\in W_0^{1,p}(\Omega;\mathbb{R}^m)\setminus\{\boldsymbol{0}\}}
Q_{p,q}(\boldsymbol{u})
=\inf_{\substack{\boldsymbol{u}\in W_0^{1,p}(\Omega;\mathbb{R}^m)\\
|\boldsymbol{u}|_q=1}}
\left(\int_\Omega |D\boldsymbol{u}|^p
-\lambda\int_\Omega|\boldsymbol{u}|^p\right).
\]

Let $\mathcal{J}_{p,q}:W_0^{1,p}(\Omega;\mathbb{R}^m)\to\mathbb{R}$ be the energy
functional associated with \eqref{lane emeden system}, namely
\[
\mathcal{J}_{p,q}(\boldsymbol{u})
=\frac{1}{p}\int_\Omega |D\boldsymbol{u}|^p
-\frac{\lambda}{p}\int_\Omega|\boldsymbol{u}|^p
-\frac{1}{q}\int_\Omega|\boldsymbol{u}|^q.
\]
Finally, we denote by
\[
c_{p,q}:=\inf_{X_{p,q}}\mathcal{J}_{p,q}
\]
the least energy level, where $X_{p,q}$ is the set of nontrivial weak solutions
of \eqref{lane emeden system}.

 \begin{proposition}\label{minimizer}
    Let $q>1$, $q\ne p$, and assume that $\lambda<\lambda_1$. Then $\boldsymbol{v}$, with $|\boldsymbol v|_q=1$, is a minimizer for $l_{p,q}$ if and only if $l_{p,q}^{\frac{1}{q-p}}\boldsymbol{v}$ is a least energy solution and \[l_{p,q}=\left(\frac{pq}{q-p}c_{p,q}\right)^{\frac{q-p}{q}}.\]
\end{proposition}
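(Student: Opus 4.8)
The plan is to turn the scale invariance of $Q_{p,q}$ and the Nehari identity for \eqref{lane emeden system} into an explicit order-preserving dictionary between the quotient and the energy. Write $A(\boldsymbol u):=\int_\Omega|D\boldsymbol u|^p-\lambda\int_\Omega|\boldsymbol u|^p$ and $B(\boldsymbol u):=\int_\Omega|\boldsymbol u|^q$, so that $Q_{p,q}=A/B^{p/q}$ and $\mathcal J_{p,q}=\frac1pA-\frac1qB$. Since $\lambda<\lambda_1$, the functional $A$ is comparable to $\|\cdot\|^p$ and in particular $A(\boldsymbol u)>0$ for $\boldsymbol u\neq\boldsymbol0$, so $l_{p,q}>0$. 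Two facts drive the argument: $Q_{p,q}(t\boldsymbol u)=Q_{p,q}(\boldsymbol u)$ for all $t>0$ (as $A$ is $p$-homogeneous and $B^{p/q}$ is $p$-homogeneous); and every non-trivial weak solution $\boldsymbol w$ satisfies, upon testing with $\boldsymbol w$ itself, the Nehari identity $A(\boldsymbol w)=B(\boldsymbol w)$.

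First I would exploit the Nehari identity to compute, for any non-trivial solution $\boldsymbol w$, that $\mathcal J_{p,q}(\boldsymbol w)=\frac{q-p}{pq}A(\boldsymbol w)$ and $Q_{p,q}(\boldsymbol w)=A(\boldsymbol w)^{(q-p)/q}$; eliminating $A(\boldsymbol w)$ yields $\mathcal J_{p,q}(\boldsymbol w)=g\!\left(Q_{p,q}(\boldsymbol w)\right)$ with $g(r):=\frac{q-p}{pq}\,r^{q/(q-p)}$. A direct computation gives $g'(r)=\tfrac1p\,r^{q/(q-p)-1}>0$ for $r>0$, so $g$ is \emph{strictly increasing} on $(0,\infty)$ in both regimes $q>p$ and $q<p$; this strict monotonicity is the crucial structural point, since it makes $g$ injective and transfers the ordering of energies to the ordering of quotients.

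Next I would produce a minimizer of $l_{p,q}$ by the direct method: a minimizing sequence for $A$ on $\{B=1\}$ is bounded (again because $\lambda<\lambda_1$), and the compact embedding $W_0^{1,p}\hookrightarrow L^q$ (valid since $q<p^*$) together with weak lower semicontinuity of $A$ yields a minimizer $\boldsymbol v$ with $B(\boldsymbol v)=1$, $A(\boldsymbol v)=l_{p,q}$. Differentiating $Q_{p,q}$ at $\boldsymbol v$ (equivalently, a Lagrange multiplier for $A$ under the constraint $B=1$, the multiplier being pinned to $\tfrac{p}{q}l_{p,q}$ by testing with $\boldsymbol v$) shows that $\boldsymbol v$ weakly solves $-\boldsymbol\Delta_p\boldsymbol v-\lambda|\boldsymbol v|^{p-2}\boldsymbol v=l_{p,q}|\boldsymbol v|^{q-2}\boldsymbol v$. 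By the $(p-1)$- and $(q-1)$-homogeneity of the two sides, the rescaling $\boldsymbol u:=l_{p,q}^{1/(q-p)}\boldsymbol v$ then solves \eqref{lane emeden system} exactly, and $\mathcal J_{p,q}(\boldsymbol u)=g(Q_{p,q}(\boldsymbol u))=g(l_{p,q})$ since $Q_{p,q}$ is scale invariant. Combining with the lower bound $\mathcal J_{p,q}(\boldsymbol w)=g(Q_{p,q}(\boldsymbol w))\ge g(l_{p,q})$ valid for every non-trivial solution $\boldsymbol w$, I conclude $c_{p,q}=g(l_{p,q})$, which is exactly the stated formula $l_{p,q}=\big(\tfrac{pq}{q-p}c_{p,q}\big)^{(q-p)/q}$.

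Finally, both implications follow from injectivity of $g$. If $\boldsymbol v$ minimizes $l_{p,q}$ (so $B(\boldsymbol v)=1$), the previous step shows $l_{p,q}^{1/(q-p)}\boldsymbol v$ is a solution with energy $g(l_{p,q})=c_{p,q}$, hence a least energy solution. Conversely, if $\boldsymbol u=l_{p,q}^{1/(q-p)}\boldsymbol v$ is a least energy solution, then it is in particular a solution, so $g(Q_{p,q}(\boldsymbol u))=\mathcal J_{p,q}(\boldsymbol u)=c_{p,q}=g(l_{p,q})$, and injectivity of $g$ forces $Q_{p,q}(\boldsymbol v)=Q_{p,q}(\boldsymbol u)=l_{p,q}$, i.e. $\boldsymbol v$ is a minimizer. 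The main obstacle is the middle step: carefully deriving the Euler--Lagrange equation for the quotient $Q_{p,q}$ and identifying the exact multiplier and rescaling factor $l_{p,q}^{1/(q-p)}$ so that the normalized minimizer becomes a bona fide solution of \eqref{lane emeden system}; the remaining delicacy is checking that $g$ is strictly monotone (hence the correspondence is a genuine bijection) uniformly across the $p$-sublinear and $p$-superlinear ranges, for which the hypothesis $\lambda<\lambda_1$ guaranteeing positivity of $A$ and $l_{p,q}$ is essential.
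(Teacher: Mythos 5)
Your argument is correct and follows essentially the same route as the paper: the Nehari identity $A(\boldsymbol w)=B(\boldsymbol w)$ for nontrivial solutions, the Lagrange multiplier identification $\Lambda=l_{p,q}$ on the constraint $\{|\boldsymbol v|_q=1\}$, and the rescaling $\boldsymbol u=l_{p,q}^{1/(q-p)}\boldsymbol v$, combined into the two inequalities that force $l_{p,q}=\bigl(\tfrac{pq}{q-p}c_{p,q}\bigr)^{(q-p)/q}$. Your two refinements — making the strictly increasing dictionary $g(r)=\tfrac{q-p}{pq}r^{q/(q-p)}$ explicit so that both directions of the equivalence follow from injectivity across the regimes $q<p$ and $q>p$, and supplying the direct-method existence of a minimizer for $l_{p,q}$ that the paper defers to the proof of Theorem \ref{thm 3} — are welcome but do not change the underlying strategy.
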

\begin{proof} We follow  \cite[Proposition 1.2]{saldana2022least}.
For every $\boldsymbol{u}\in X_{p,q}$ weak solution of \eqref{lane emeden system}, we define $\boldsymbol{v}:=\frac{\boldsymbol{u}}{|\boldsymbol{u}|_q}$. Hence,
\[
\int_\Omega|D\boldsymbol u|^p-\lambda\int_\Omega|\boldsymbol u|^p=\int_\Omega|\boldsymbol u|^q,
\]
and
    \begin{align*}
        l_{p,q}&\le \int_\Omega |D\boldsymbol{v}|^p-\lambda\int_\Omega|\boldsymbol{v}|^p=\frac{\displaystyle\int_\Omega |D\boldsymbol{u}|^p-\lambda\int_\Omega|\boldsymbol{u}|^p}{\displaystyle\left(\int_\Omega |\boldsymbol{u}|^q\right)^{\frac{p}{q}}}\\
        &=\left(\int_\Omega|D\boldsymbol{u}|^p-\lambda\int_\Omega|\boldsymbol{u}|^p\right)^{\frac{q-p}{q}}=\left(\frac{pq}{q-p}\mathcal J_{p,q}(\boldsymbol{u})\right)^{\frac{q-p}{q}}.
    \end{align*}
    Taking the infimum over $X_{p,q}$, we obtain:
    \begin{equation}\label{stima l_{p,q}1}
       l_{p,q}\le \left(\frac{pq}{q-p}c_{p,q}\right)^{\frac{q-p}{q}}. 
    \end{equation}
   Vice versa, if $\boldsymbol{v}\in W_0^{1,p}(\Omega; \R^m)$ is a minimizer of $l_{p,q}$ with $|\boldsymbol{v}|_q=1$, then $\boldsymbol{v}$ is a weak solution of 
    \[-\boldsymbol{\Delta}_p\boldsymbol{v}-\lambda|\boldsymbol{v}|^{p-2}\boldsymbol{v}=l_{p,q}|\boldsymbol{v}|^{q-2}\boldsymbol{v}, \qquad \boldsymbol v\in W_0^{1,p}(\Omega; \R^m).\]
     Indeed, by the Lagrange multiplier theorem, there exists $\Lambda\in\R$ such that
     \[
     \int_\Omega|D\boldsymbol v|^{p-2}D\boldsymbol v\cdot D\boldsymbol\varphi-\lambda\int_\Omega|\boldsymbol v|^{p-2}\boldsymbol v\cdot\boldsymbol \varphi=\Lambda\int_\Omega|\boldsymbol v|^{q-2}\boldsymbol v\cdot\boldsymbol \varphi,\qquad \forall\ \boldsymbol\varphi\in W_0^{1,p}(\Omega; \R^m).
     \]

    Now, testing with $\boldsymbol{\varphi}=\boldsymbol{v}$, we obtain
    \[l_{p,q}=\int_\Omega|D\boldsymbol v|^p-\lambda\int_\Omega|\boldsymbol v|^p=\Lambda,\]
     and the claim follows. 
     
    Now, the  function $\boldsymbol{u}:=l_{p,q}^{\frac{1}{q-p}}\boldsymbol{v}$ solves $-\boldsymbol{\Delta}_p\boldsymbol{u}-\lambda|\boldsymbol{u}|^{p-2}\boldsymbol{u}=|\boldsymbol{u}|^{q-2}\boldsymbol{u}$ and
    \begin{align*}
        c_{p,q}\le\mathcal J_{p,q}\left(\boldsymbol{u}\right)&=\frac{q-p}{pq}\left[\int_\Omega\left|D\boldsymbol{u}\right|^p-\lambda\int_\Omega\left|\boldsymbol{u}\right|^p\right]\\
        &=\frac{q-p}{pq}\left[\int_\Omega|D\boldsymbol v|^p-\lambda\int_\Omega|\boldsymbol v|^p \right]l_{p,q}^{\frac{p}{q-p}}\\
        &=\frac{q-p}{pq}l_{p,q}^{\frac{q}{q-p}}.
    \end{align*}
    Thus, from \eqref{stima l_{p,q}1}:
\[c_{p,q}\le\frac{q-p}{pq}l_{p,q}^{\frac{q}{q-p}}\le c_{p,q},\]
therefore\[ l_{p,q}=\left(\frac{pq}{q-p}c_{p,q}\right)^{\frac{q-p}{q}}.\]
\end{proof}
Now, we prove that \(l_{p,q}\) is achieved and that any minimizer for \(l_{p,q}\) has the form
\(\mathbf c\,\omega\), with \(\mathbf c\in S^{m-1}\). After the rescaling
\[
\widetilde\omega:=l_{p,q}^{\frac1{q-p}}\omega,
\]
the function \(\widetilde\omega\) is a positive solution of 

\begin{equation}\label{eq scalare}
\begin{cases}
-\Delta_pw=\lambda|w|^{p-2}w+|w|^{q-2}w&\text{in $\Omega$,}\\
w=0&\text{on $\partial\Omega$}.
\end{cases}
\end{equation}

\begin{proposition}\label{soluzione omega}
Let $\boldsymbol c\in S^{m-1}$ and let $\omega\in W_0^{1,p}(\Omega; \R)$. Then 
\[
\boldsymbol{v}=\boldsymbol c\omega \text{  is  a solution of \eqref{lane emeden system}}\iff \text{ $\omega$ solves \eqref{eq scalare}.}
\]
\end{proposition}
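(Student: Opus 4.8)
The plan is to reduce the vectorial problem to the scalar one by exploiting the rank-one structure of the ansatz together with the normalization $|\boldsymbol c|=1$. First I would record the elementary pointwise identities coming from $v^i=c^i\omega$ and $\nabla v^i=c^i\nabla\omega$: using $\sum_{i=1}^m (c^i)^2=|\boldsymbol c|^2=1$,
\[
|D\boldsymbol v|^2=\sum_{i=1}^m|\nabla v^i|^2=|\boldsymbol c|^2\,|\nabla\omega|^2=|\nabla\omega|^2,\qquad |\boldsymbol v|^2=|\boldsymbol c|^2\,\omega^2=\omega^2,
\]
so that $|D\boldsymbol v|=|\nabla\omega|$ and $|\boldsymbol v|=|\omega|$. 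This is the decisive step, and it is exactly where $\boldsymbol c\in S^{m-1}$ is used: the normalization is what makes the nonlinear, non-separable operator $\boldsymbol u\mapsto|D\boldsymbol u|^{p-2}D\boldsymbol u$ factor through the scalar $p$-Laplacian.

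Second, I would substitute these identities into the weak formulation of \eqref{lane emeden system}. For a test field $\boldsymbol\varphi=(\varphi^1,\dots,\varphi^m)\in W_0^{1,p}(\Omega;\R^m)$ the left-hand side becomes $\int_\Omega|\nabla\omega|^{p-2}\nabla\omega\cdot\nabla(\boldsymbol c\cdot\boldsymbol\varphi)$ and the right-hand side becomes $\lambda\int_\Omega|\omega|^{p-2}\omega\,(\boldsymbol c\cdot\boldsymbol\varphi)+\int_\Omega|\omega|^{q-2}\omega\,(\boldsymbol c\cdot\boldsymbol\varphi)$, so the whole identity depends on $\boldsymbol\varphi$ only through the scalar function $\psi:=\boldsymbol c\cdot\boldsymbol\varphi\in W_0^{1,p}(\Omega;\R)$. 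The key observation is that, since $|\boldsymbol c|=1$, the map $\boldsymbol\varphi\mapsto\boldsymbol c\cdot\boldsymbol\varphi$ is onto $W_0^{1,p}(\Omega;\R)$: given $\psi$, the choice $\boldsymbol\varphi=\psi\boldsymbol c$ yields $\boldsymbol c\cdot\boldsymbol\varphi=|\boldsymbol c|^2\psi=\psi$. Hence the vectorial weak formulation holds for every $\boldsymbol\varphi$ if and only if
\[
\int_\Omega|\nabla\omega|^{p-2}\nabla\omega\cdot\nabla\psi=\lambda\int_\Omega|\omega|^{p-2}\omega\,\psi+\int_\Omega|\omega|^{q-2}\omega\,\psi\qquad\text{for every }\psi\in W_0^{1,p}(\Omega;\R),
\]
that is, if and only if $\omega$ is a weak solution of $-\Delta_p\omega=\lambda|\omega|^{p-2}\omega+|\omega|^{q-2}\omega$.

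This already gives the equivalence at the level of the differential equations; it then remains to reconcile it with \eqref{eq scalare}, whose right-hand side is written $\lambda\omega^{p-1}+\omega^{q-1}$ and which carries the sign condition $\omega>0$. The implication $(\Leftarrow)$ is immediate: if $\omega$ solves \eqref{eq scalare}, then $\omega>0$ forces $\omega^{p-1}=|\omega|^{p-2}\omega$ and $\omega^{q-1}=|\omega|^{q-2}\omega$, and the two computations above show that $\boldsymbol c\omega$ solves the system. The delicate point, which I expect to be the main obstacle, is recovering positivity in $(\Rightarrow)$, since the sign-free scalar equation obtained above could a priori admit sign-changing or non-positive solutions. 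To close this I would not replace $\omega$ by $|\omega|$ (which does not in general preserve the equation) but rather invoke the strong maximum principle / Harnack inequality for the $p$-Laplacian to conclude that a nontrivial \emph{nonnegative} weak solution of $-\Delta_p\omega=\lambda|\omega|^{p-2}\omega+|\omega|^{q-2}\omega$ is strictly positive in $\Omega$; in the setting where the proposition is applied, namely least energy solutions, $\omega$ arises as a minimizer of the scalar Rayleigh quotient and may therefore be chosen nonnegative, so this positivity holds. Modulo this sign discussion, the statement follows directly from the two displayed identities.
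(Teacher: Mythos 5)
Your proof is correct and follows essentially the same route as the paper's: both arguments rest on the pointwise identities $|D\boldsymbol v|=|\nabla\omega|$ and $|\boldsymbol v|=|\omega|$ coming from $|\boldsymbol c|=1$, and then reduce the vectorial weak formulation to the scalar one (the paper does this component by component using that some $c^j\neq0$, you do it by noting the formulation depends on $\boldsymbol\varphi$ only through $\psi=\boldsymbol c\cdot\boldsymbol\varphi$ and that $\boldsymbol\varphi\mapsto\boldsymbol c\cdot\boldsymbol\varphi$ is onto $W_0^{1,p}(\Omega;\R)$ --- a purely cosmetic difference). The one substantive point where you go beyond the paper is the sign condition: you correctly observe that the forward implication only yields the sign-free equation $-\Delta_p\omega=\lambda|\omega|^{p-2}\omega+|\omega|^{q-2}\omega$ rather than \eqref{eq scalare} with $\omega>0$, an issue the paper's proof silently skips; your resolution (in the application $\omega=|\boldsymbol u|\ge0$, and Harnack upgrades this to $\omega>0$) is exactly how the paper handles positivity later, in Step 2 of the proof of Theorem \ref{thm 3}, rather than inside this proposition.
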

\begin{proof}
For every $\boldsymbol c\in S^{m-1}$, we have that:
\[|\boldsymbol{v}|=|\boldsymbol c|\cdot|\omega|=|\omega|,\] \[|D\boldsymbol v|=\left(\sum_{i=1}^m |c^i|^2\cdot|\nabla\omega|^2\right)^{1/2}=|\boldsymbol c|\cdot|\nabla\omega|=|\nabla\omega|. \]
 Hence,
  \begin{align*}
     \int_\Omega |D\boldsymbol{v}|^{p-2}\nabla v^i\cdot\nabla\varphi^i&=c^i\int_\Omega |\nabla\omega|^{p-2}\nabla\omega\cdot\nabla\varphi^i,\\
     \lambda\int_\Omega|\boldsymbol{v}|^{p-2}v^i\varphi^i+\int_\Omega|\boldsymbol{v}|^{q-2}v^i\varphi^i&=\lambda c^i\int_\Omega|\omega|^{p-2}\omega\varphi^i+c^i\int_\Omega|\omega|^{q-2}\omega\varphi^i,
 \end{align*}
 for every $\boldsymbol{\varphi}=(\varphi^1,\dots,\varphi^m)\in W_0^{1,p}(\Omega; \R^m)$. If $\omega$ solves \eqref{eq scalare}, we conclude that 
 \[
   \int_\Omega |D\boldsymbol{v}|^{p-2}\nabla v^i\cdot\nabla\varphi^i=     \lambda\int_\Omega|\boldsymbol{v}|^{p-2}v^i\varphi^i+\int_\Omega|\boldsymbol{v}|^{q-2}v^i\varphi^i, \qquad i=1,\dots,m,
 \]
 i.e. $\boldsymbol v$ solves \eqref{lane emeden system}. Vice versa, assume that $\boldsymbol v$ solves \eqref{lane emeden system}. Since $\boldsymbol c\in S^{m-1}$, it is not possible that $c^i=0$ for every $i=1,\dots,m$. Thus, 
 \[
 \int_\Omega|\nabla\omega|^{p-2}\nabla\omega\cdot\nabla\varphi^j=\lambda\int_\Omega|\omega|^{p-2}\omega\varphi^j+\int_\Omega|\omega|^{q-2}\omega\varphi^j,
 \]
 where $j\in\{1,\dots,m\}$ is such that $c^j\ne0$. Thus, $\omega$ solves \eqref{eq scalare}.

 \end{proof}

\begin{proof}[Proof of Theorem \ref{thm 3}]
We divide the proof into two steps:\\
\textbf{Step 1}: the infimum $l_{p,q}$ is achieved.

If $q\in(1,p)$, reasoning as in Theorem \ref{thm 1}, we have a minimizer for $\mathcal J_{p,q}$ which is a least energy solution. Indeed, the energy functional satisfies:
\[\mathcal J_{p,q}(\boldsymbol u)\ge\left(1-\frac{\lambda}{\lambda_1}\right)\frac{\|\boldsymbol u\|^p}{p}-\frac{1}{q}\int_\Omega|\boldsymbol u|^q,\]
and it is still coercive and weakly lower semicontinuous. 

On the other hand, when $q\in(p,p^*)$ Theorem \ref{thm 2} ensures that $X_{p,q}\ne\emptyset$. Let $\{\boldsymbol{u}_n\}$ be a minimizing sequence for ${\mathcal J_{p,q}}_{|_{X_{p,q}}}$. Then $\{\boldsymbol{u}_n\}$ is a Palais-Smale sequence for $\mathcal J_{p,q}$ at level $c_{p,q}$ and Step 3 of  proof of Theorem \ref{thm lambda} implies the existence of a weak solution $\boldsymbol{u}\in W_0^{1,p}(\Omega; \R^m)$ at level $c_{p,q}$. We claim that $\boldsymbol{u}\not\equiv0$. 

Indeed, since $\langle\mathcal J_{p,q}'(\boldsymbol{u}),\boldsymbol{u}\rangle=0$, there exist $C_1,C_2>0$ such that
\[C_1\|\boldsymbol u\|^p\le\int_\Omega |D\boldsymbol{u}|^p-\lambda\int_\Omega|\boldsymbol{u}|^p=\int_\Omega|\boldsymbol{u}|^q\le C_2\left(\int_\Omega|D\boldsymbol{u}|^p\right)^{\frac{q}{p}}.\]
It follows that there exists $C_3>0$ such that $\|\boldsymbol u\|\ge C_3>0$ and the weak solution $\boldsymbol u$ is a non-trivial least energy solution. In particular, Proposition \ref{minimizer} implies that $l_{p,q}$ is achieved.\\
\textbf{Step 2:} Classification of least energy solutions.

 Let $\boldsymbol{u}\in W_0^{1,p}(\Omega; \R^m)$ be a minimizer for $l_{p,q}$. We define 
\[\omega:=\sqrt{|u^1|^2+\dots+|u^m|^2}=|\boldsymbol{u}|\in W_0^{1,p}(\Omega; \R),\]
and $\boldsymbol{v}:=\widetilde{\boldsymbol{c}}\omega$ for some $\widetilde{\boldsymbol{c}}\in S^{m-1}$. 

We observe that $\nabla \omega=D\boldsymbol{u}\cdot\frac{\boldsymbol{u}}{|\boldsymbol{u}|}$ on $\{\omega\ne0\}=\{|\boldsymbol u|\ne0\}$ and
\[|\nabla \omega|\le|D\boldsymbol{u}|,\ \ \text{a.e. in $\Omega$}. \] 

     Taking $t:=|\boldsymbol v|_q^{-1}$, we obtain
    \begin{align*}
        l_{p,q}\le Q_{p,q}(t\boldsymbol{v})&=\int_\Omega|D(t\boldsymbol{v})|^p-\lambda\int_\Omega|t\boldsymbol{v}|^p=t^p\left(\int_\Omega|D\boldsymbol{v}|^p-\lambda\int_\Omega|\boldsymbol{v}|^p\right)\\
        &=\frac{\displaystyle\int_\Omega|D\boldsymbol{v}|^p-\lambda\int_\Omega|\boldsymbol{v}|^p}{\displaystyle\left(\int_\Omega|\boldsymbol{v}|^q\right)^{\frac{p}{q}}}=\frac{\displaystyle\int_\Omega|\nabla\omega|^p-\lambda\int_\Omega|\omega|^p}{\displaystyle\left(\int_\Omega|\omega|^q\right)^{\frac{p}{q}}}=\frac{\displaystyle\int_\Omega|\nabla\omega|^p-\lambda\int_\Omega|\boldsymbol u|^p}{\displaystyle\left(\int_\Omega|\boldsymbol{u}|^q\right)^{\frac{p}{q}}}\\
      &\le \frac{\displaystyle\int_\Omega |D\boldsymbol{u}|^p-\lambda\int_\Omega|\boldsymbol{u}|^p}{\displaystyle\left(\int_\Omega|\boldsymbol{u}|^q\right)^{\frac pq}} = Q_{p,q}(\boldsymbol u)=l_{p,q}.
    \end{align*}
    Therefore, $|D\boldsymbol u|=|\nabla\omega|$, $Q_{p,q}(t\boldsymbol v)=Q_{p,q}(\boldsymbol u)$ and $t\boldsymbol v$ is  a minimizer for $l_{p,q}$.
     Since
    \[
\frac{\displaystyle\int_\Omega|\nabla\omega|^p-\lambda\int_\Omega|\omega|^p}{\displaystyle\left(\int_\Omega|\omega|^q\right)^{\frac{p}{q}}}\le l_{p,q}\le \inf_{\substack{u\in W_0^{1,p}(\Omega; \R)\\ |u|_q=1}}\frac{\displaystyle\int_\Omega|\nabla u|^p-\lambda\int_\Omega|u|^p}{\displaystyle\left(\int_\Omega|u|^q\right)^{\frac{p}{q}}},
    \]
    we have  that $\omega=|\boldsymbol u|$ solves
\[
-\Delta_p\omega=\lambda\omega^{p-1}+l_{p,q}\omega^{q-1},\qquad \omega\in W_0^{1,p}(\Omega; \R),
\]
 and   $\omega>0$ by Harnack's inequality \cite{trudinger1967harnack}.  
    
  By Lagrange's identity \cite[eq. (2.1)]{hynd2023uniqueness}:
   \begin{equation}\label{lagrange's identity}
      \left|\sum_{i=1}^m\tau^iz^i\right|^2=\sum_{i=1}^m|\tau^i|^2\sum_{i=1}^m|z^i|^2-\sum_{\substack{i,j=1\\ i<j}}^m|\tau^iz^j-\tau^jz^i|^2, \quad \tau^1,\dots,\tau^m\in\R, z^1,\dots,z^m\in\R^N,
   \end{equation}
 
   we can conclude that $\boldsymbol{u}=\boldsymbol{c}\omega$ with $\boldsymbol c\in S^{m-1}$.
   
    Indeed, \eqref{lagrange's identity} with  $\tau^i=u^i$ and $z_i=\nabla u^i$,  implies that
   \[\omega^2|\nabla\omega|^2 =\omega^2|D\boldsymbol{u}|^2-\sum_{\substack{i,j=1\\ i<j}}^m| u^i\nabla  u^j- u^j\nabla  u^i|^2\quad \text{a.e. $x\in\Omega$}.\]
   Since $|D\boldsymbol{u}|=|\nabla \omega|$, we obtain  $ u^i\nabla  u^j= u^j\nabla  u^i$ for a.e. $x\in\Omega$ and for every $i,j=1,\dots,m$. Hence,
\[\omega^2\nabla u^i=\sum_{j=1}^mu^ju^j\nabla u^i=\sum_{j=1}^mu^ju^i\nabla u^j=u^i\sum_{j=1}^mu^j\nabla u^j=u^i(\omega\nabla\omega).\]
Thus, $\omega\nabla u^i=u^i\nabla\omega$ and,  
\[\nabla\left(\frac{u^i}{\omega}\right)=\frac{\nabla u^i\omega-u^i\nabla\omega}{\omega^2}=0.\]
Then the function $\frac{u^i}{\omega}=\frac{u^i}{|\boldsymbol u|}$ must be constant for every $i=1,\dots,m$. Finally, setting
\[
\widetilde\omega:=l_{p,q}^{\frac{1}{q-p}}\omega,
\]
we have that $\widetilde\omega$ solves
\[
-\Delta_p\widetilde\omega=\lambda\widetilde\omega^{p-1}+\widetilde\omega^{q-1},
\qquad \widetilde\omega\in W_0^{1,p}(\Omega; \R),
\]
and the proof is complete.
\end{proof}

\end{document}